\newtheorem{thm}{Theorem}[section]
\newtheorem{cor}[thm]{Corollary}
\newtheorem{lem}[thm]{Lemma}
\newtheorem{defn}[thm]{Definition}
\DeclareMathAlphabet\EuScript{U}{eus}{m}{n}
\SetMathAlphabet\EuScript{bold}{U}{eus}{b}{n}
\newtheorem*{rep@theorem}{\rep@title}
\newcommand{\newreptheorem}[2]{%
	\newenvironment{rep#1}[1]{%
		\def\rep@title{#2 \ref{##1}}%
		\begin{rep@theorem}}%
		{\end{rep@theorem}}}
\theoremstyle{definition}
\DeclareMathOperator{\arccosh}{arcCosh}
\begin{document}
	
	\title{Generalization of the energy distance by Bernstein functions}
	
	\author{J. C. Guella}
	\email{jean.guella@riken.jp}
	\address{RIKEN Center for Advanced Intelligence Project, Tokyo, Japan}
	
	\begin{abstract}We reprove the well known fact that the energy distance defines a metric on the space of Borel probability measures on a  Hilbert space with finite first moment by a new approach, by analysing the behaviour of the Gaussian kernel on Hilbert spaces and a Maximum Mean Discrepancy analysis. From this new point of view we are able to generalize the energy distance metric to a family of kernels  related to Bernstein functions and conditionally negative definite kernels. We also explain what occurs on the energy distance on the kernel $\|x-y\|^{\alpha}$ for every $\alpha >2$, where we also generalize the idea to a family of  kernels  related to derivatives of completely monotone functions and conditionally negative definite kernels.    
	\end{abstract}
    \keywords{ Energy distance; Metric spaces of strong negative type; Metrics on probabilities; Bernstein functions; Conditionally negative definite kernels}
     \subjclass[2010]{ 42A82 ; 	43A35 }

	\maketitle

	\tableofcontents
	
\section{Introduction}


A popular method to compare two probabilities is done by  embedding the space (or a subset) of probabilities into a Hilbert space and use the metric provided by the embedding. Currently, there are two main approaches  for this task:
\begin{enumerate}
    \item[$(I)$]  The maximum mean discrepancy on a bounded, continuous, positive definite kernel $K: X\times X \to \mathbb{R}$ that is characteristic \cite{gretton2006kernel}, \cite{fukumizu2004dimensionality}. The distance between two Radon regular probabilities $P$ and $Q$ is defined by  
    $$
    MMD(P,Q):= \sqrt{\int_{X}\int_{X}K(x,y)d[P-Q](x)d[P-Q](y) }.
    $$
   \item[$(II)$]  The use of a continuous conditionally negative definite kernel $\gamma: X \times X \to \mathbb{R}$ with $\gamma(x,x)=0$ for every $x \in X$, \cite{sejdinovic2013equivalence}. The kernel $\gamma$ must additionally satisfy the equality
 \begin{equation}\label{condcndkernel}
   \int_{X}\int_{X}-\gamma(x,y)d[P-Q](x)d[P-Q](y)= 0 
\end{equation}
   for two Radon regular probabilities $P$ and $Q$ that integrates the function $x \to  \gamma(x,z)$ for every $z \in X$ only when $P=Q$. It can be proved that the above double integral is always a nonnegative number and when this property occurs 
    $$
   D_{\gamma}(P,Q):= \sqrt{\int_{X}\int_{X}-\gamma(x,y)d[P-Q](x)d[P-Q](y) },
    $$ 
    is a metric on the mentioned subspace of probabilities on $X$. 
\end{enumerate}

On this paper, we focus on the second method.

The most popular example of this method is the energy distance, initially defined as $X=\mathbb{R}^{m}$, $\gamma(x,y)= \|x-y\|^{\theta}$, where $0< \theta < 2$  and the set of probabilities are those that integrates  $\|x\|^{\theta}$, \cite{szekely2004testing}, \cite{szekely2013energy}. When $\theta=2$, the kernel is conditionally negative definite but do not satisfy the additional property of Equation \ref{condcndkernel}. 

A more geometrical approach is when $\gamma$ is a metric on $X$ that satisfy  Equation \ref{condcndkernel} (the topology is the one from the metric), hence $(X, \gamma)$ is a metric space of strong negative type. Examples of such spaces include:

$\bullet$ Hilbert spaces: Proved on \cite{lyons2013} as a generalization of the energy distance. 

$\bullet$ Hyperbolic spaces (finite dimensional): Proved on \cite{lyons2014}

In some cases, the conditionally negative definite kernel  $\gamma$ may define a metric on the set $X$, but $\gamma$ is not of strong type. A metric space where we only know that the distance is a conditionally negative definite kernel is called a metric space of negative type.  An example of such space is the real sphere, proved on  \cite{gangolli}, where it is also proved that the real, complex and quaternionic projective spaces and the Cayley projective plane are not metric spaces of negative type.

In \cite{lyons2013}, it is also proved that if $(X,\gamma)$ is a metric space of  negative type  then $\gamma^{\theta}$, $0< \theta <1$ is a conditionally negative definite kernel that satisfies  Equation \ref{condcndkernel}, with the topology of the metric $\gamma$.  Interestingly, the kernel $\gamma^{\theta}$ is a metric on $X$, with the same topology as $\gamma$, so we can rephrase the result of Lyon as $( X, \gamma^{\theta})$ being a metric space of strong negative type. We provide more details and generalizations of this property on Corollary \ref{ber+condcor}.

The major aim of this paper is to provide a large amount of examples of conditionally negative definite kernels that satisfy Equation \ref{condcndkernel}, by using Bernstein  functions on Theorem \ref{ber+cond}. Our method encompasses  all of the above mentioned kernels that satisfy $(II)$. We also provide a new proof that hyperbolic spaces (any dimension)  are metric paces of strong negative type  on Theorem on \ref{hyperinnprod}.

In \cite{mattner1997strict}, Mattner analysed the behaviour of the kernel $\|x-y\|^{\alpha}$, for $\alpha >2$, defined on  $\mathbb{R}^{m}$. What occurs is that we can still provide a metric structure on the space of probabilities with certain integrability assumptions,  but we can only compare them if they have the same vector mean ( $2< \alpha < 4$), the same same vector mean and the same covariance matrix ($4< \alpha < 6$), and so on. It also provided the same analysis for others  radial kernels, that we generalize on Theorem \ref{principal} to a broader setting.

Section \ref{Conditionally positive definite kernels} is focused on the integrability conditions of a conditionally negative definite kernel (and its generalizations). Section \ref{Inner products defined by CND kernels and derivatives of completely monotone functions} contains the most important results of this paper, mentioned before. On Section \ref{Equimeasurability for derivatives of completely monotone functions} we analyse the space of functions
$$
y \in \mathbb{H}\to \int_{\mathcal{H}} \psi(\|x-y\|^{2}) d\mu(x) \in \mathbb{R},
$$
 where $\psi$ is a continuous function that is the difference of two derivatives (same order) of a completely monotone function. More precisely, we analyse when they are uniquely defined by the measure $\mu$. Section \ref{Definitions} is entirely focused on definitions that we use. The proofs are presented on Section \ref{Proofs}.

\section{Definitions}\label{Definitions}

We recall that a  nonnegative measure $\lambda$ on a  Hausdorff space $X$ is    Radon regular (which we simply refer as Radon) when it is a Borel measure such that is finite on every compact set of $X$ and
\begin{enumerate}
    \item[(i)](Inner regular)$\lambda(E)= \sup\{\lambda(K), \ \ K \text{ is compact }, K\subset E\} $ for every Borel set $E$.
    \item[(ii)](Outer regular) $\lambda(E)= \inf\{\lambda(U), \ \ U \text{ is open }, E\subset U\} $ for every Borel set $E$.
\end{enumerate}
 
 We then said that a complex valued measure $\lambda$ of bounded variation is Radon if its variation is a Radon measure. The vector space of such measures is denoted by $\mathfrak{M}(X)$. Recall that every Borel measure of finite variation (in particular, probability measures) on a separable complete metric space is necessarily Radon. 
 
 An semi-inner product on a real (complex) vector space $V$  is a bilinear  real (sesquilinear complex) valued function $( \cdot, \cdot)_{V} $ defined on  $V\times V$ such that $(u,u)_{V} \geq 0$ for every $u \in V$. When this inequality is an equality only for $u=0$, we say that  $( \cdot, \cdot)_{V}$ is an inner-product. Similarly, a pseudometric on a set $X$ is a symmetric function $d: X\times X \to [0, \infty)$, such that $d(x,x)=0$ that satisfies the triangle inequality. If $d(x,y)=0$ only when $x=y$, $d$ is a metric on $X$. 
 
 A kernel $K: X \times X \to \mathbb{C}$ is called positive definite if for every finite quantity of distinct points $x_{1}, \ldots, x_{n} \in X$ and scalars $c_{1}, \ldots, c_{n} \in \mathbb{C}$, we have that
$$
\int_{X}\int_{X}K(x,y)d\lambda(x)d\overline{\lambda}(y)=\sum_{i, j =1}^{n}c_{i}\overline{c_{j}} K(x_{i}, x_{j}) \geq 0,
$$
where $\lambda= \sum_{i=1}^{n}c_{i}\delta_{x_{i}}$.  The set of measures on $X$ used before are denoted by the symbol $\mathcal{M}_{\delta}(X)$.

 The  reproducing kernel Hilbert space (RKHS) of a positive definite kernel $K: X \times X \to \mathbb{C}$ is the Hilbert space $\mathcal{H}_{K} \subset \mathcal{F}(X, \mathbb{C})$, and it satisfies \cite{Steinwart}
 \begin{enumerate}
    \item[$(i)$]  $x \in X \to K_{y}(x):= K(x,y) \in \mathcal{H}_{K}$;
    \item[$(ii)$] $\langle K_{y}, K_{x}\rangle = K(x,y) $
    \item[$(iii)$] $\overline{ span\{ K_{y}, \quad y \in X\} }= \mathcal{H}_{K}$.
    \end{enumerate}
When  $X$ is a Hausdorff  space and $K$ is continuous it holds that $\mathcal{H}_{K} \subset C(X)$.

The following widely known result describes how it is possible to define a semi-inner product structure on a subspace of $\mathfrak{M}(X)$ using a continuous positive definite kernel.
 
 \begin{lem}\label{initialextmmddominio}
If $K: X \times X \to \mathbb{C}$ is a continuous positive definite kernel and  $\mu \in \mathfrak{M}(X)$  with $\sqrt{K(x,x)} \in L^{1}(|\mu|)$ ($\mu \in \mathfrak{M}_{\sqrt{K}}(X)$), then 
$$
z \in X \to K_{\mu}(z):=\int_{X} K(x,z)d\mu(x) \in \mathbb{C}
$$	
is an element of $\mathcal{H}_{K}$, and if $\eta$ is another measure  with the same conditions as $\mu$, we have that
$$
\langle K_{\eta}, K_{\mu}\rangle_{\mathcal{H}_{K}}= \int_{X} \int_{X}k(x,y)d\eta(x)d\overline{\mu}(y).
$$
In particular, $(\eta, \mu) \in \mathfrak{M}_{\sqrt{K}}(X) \times \mathfrak{M}_{\sqrt{K}}(X) \to \langle K_{\eta}, K_{\mu}\rangle_{\mathcal{H}_{K}} $ is a semi-inner product.
\end{lem}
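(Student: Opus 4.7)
The plan is to identify $K_\mu$ as the Riesz representative of a suitable bounded (anti)linear functional on $\mathcal{H}_K$, and then to derive the inner-product formula by applying that same functional to a second representative.

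First I would check that $K_\mu(z)$ is pointwise well defined. The Cauchy--Schwarz inequality for positive definite kernels gives $|K(x,z)| \leq \sqrt{K(x,x)\,K(z,z)}$, so
$$
|K_\mu(z)| \leq \sqrt{K(z,z)} \int_X \sqrt{K(x,x)}\,d|\mu|(x) < \infty .
$$
The same product bound yields that $\int_X\int_X |K(x,y)|\,d|\eta|(x)\,d|\mu|(y)$ is finite whenever both $\eta$ and $\mu$ satisfy the stated integrability assumption, which will justify every later use of Fubini.

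Next, define $\Lambda_\mu : \mathcal{H}_K \to \mathbb{C}$ by $\Lambda_\mu(f) := \int_X \overline{f(x)}\,d\mu(x)$. This functional is antilinear, and the reproducing property together with Cauchy--Schwarz in $\mathcal{H}_K$ gives $|f(x)| = |\langle f, K_x\rangle_{\mathcal{H}_K}| \leq \|f\|_{\mathcal{H}_K}\sqrt{K(x,x)}$, hence $|\Lambda_\mu(f)| \leq \|f\|_{\mathcal{H}_K}\int \sqrt{K(x,x)}\,d|\mu|$. Riesz representation for bounded antilinear functionals then produces a unique $g_\mu \in \mathcal{H}_K$ with $\Lambda_\mu(f) = \langle g_\mu, f\rangle_{\mathcal{H}_K}$ for every $f \in \mathcal{H}_K$. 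Testing the identity on $f = K_z$ and using $\langle g_\mu, K_z\rangle_{\mathcal{H}_K} = \overline{g_\mu(z)}$ together with Hermitian symmetry $\overline{K(x,z)} = K(z,x)$ shows that $g_\mu(z) = \int_X K(x,z)\,d\mu(x) = K_\mu(z)$, placing $K_\mu$ in $\mathcal{H}_K$.

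Finally, once $K_\eta$ has been constructed the same way, applying $\Lambda_\mu$ to it yields
$$
\langle K_\eta, K_\mu\rangle_{\mathcal{H}_K} = \Lambda_\mu(K_\eta) = \int_X \overline{K_\eta(y)}\,d\mu(y) = \int_X\int_X K(x,y)\,d\eta(x)\,d\overline{\mu}(y),
$$
where the last equality unfolds $K_\eta$ and uses Hermitian symmetry of $K$ to rearrange, with Fubini legitimised by the estimate from the first step. The semi-inner-product axioms (sesquilinearity in $(\eta,\mu)$ and positivity of $\langle K_\mu, K_\mu\rangle$) are then immediate from the fact that this expression is the pull-back of the genuine inner product on $\mathcal{H}_K$ along the linear map $\mu \mapsto K_\mu$. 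The only point requiring care is the bookkeeping of conjugations to stay consistent with the convention $\langle K_y, K_x\rangle_{\mathcal{H}_K} = K(x,y)$; once the Cauchy--Schwarz bound on $K$ and Riesz representation are in place, no other analytic input is needed.
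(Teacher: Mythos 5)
Your argument is correct in substance. Note that the paper never actually proves Lemma \ref{initialextmmddominio}: it is invoked as a known fact and then used as the base case in the proof of its generalization, Lemma \ref{extmmddominio}, which proceeds by a different route (truncating $\mu$ to an exhausting sequence of compact sets and running a Cauchy-sequence argument in $\mathcal{H}_K$). Your Riesz-representation construction is the standard direct proof and is a good self-contained substitute: the bound $|K(x,z)|\le\sqrt{K(x,x)K(z,z)}$, the estimate $|f(x)|\le\|f\|_{\mathcal{H}_K}\sqrt{K(x,x)}$ (together with $\mathcal{H}_K\subset C(X)$, which makes every $f\in\mathcal{H}_K$ Borel measurable and $|\mu|$-integrable), the boundedness of the functional, the identification of its representative by evaluation against the kernel sections $K_z$, and the pull-back argument for the semi-inner product are all sound, and the integrability estimate in your first step does legitimise the final iterated integral.

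The one thing to repair is the conjugation bookkeeping, which you flag but resolve incorrectly under the paper's convention. Since $\langle K_y,K_x\rangle_{\mathcal{H}_K}=K(x,y)$, the reproducing identity is $f(z)=\langle f,K_z\rangle_{\mathcal{H}_K}$ (inner product linear in the first slot), so the representative of $\Lambda_\mu(f)=\int_X\overline{f(x)}\,d\mu(x)$ satisfies $g_\mu(z)=\langle g_\mu,K_z\rangle_{\mathcal{H}_K}=\Lambda_\mu(K_z)=\int_X\overline{K(x,z)}\,d\mu(x)=\int_X K(z,x)\,d\mu(x)$; there is no conjugate on $g_\mu(z)$, and the integral lands in the second argument of $K$, not the first. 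For real-valued kernels and measures, which is the setting the paper works in from Section \ref{Inner products defined by CND kernels and derivatives of completely monotone functions} onward, the two expressions coincide and your proof is complete as written. For genuinely complex kernels the difference is not cosmetic: with $K(x,y)=(1-x\overline{y})^{-1}$ on the unit disc and $\mu=\delta_{x_0}$, the function $z\mapsto\int_X K(x,z)\,d\mu(x)=K(x_0,z)$ is anti-holomorphic and lies outside $\mathcal{H}_K$, so what your functional correctly produces is the element $z\mapsto\int_X K(z,x)\,d\mu(x)$, and the resulting identity reads $\langle K_\eta,K_\mu\rangle_{\mathcal{H}_K}=\int_X\int_X K(y,x)\,d\eta(x)\,d\overline{\mu}(y)$. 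The lemma as printed in the paper is loose on exactly the same point, so this is a correction to record rather than a flaw in your method.
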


 We present a generalization of this result to a larger class of measures in Lemma \ref{extmmddominio}. Usually, the kernel $K$ is bounded, so $\mathfrak{M}_{\sqrt{K}}(X) = \mathfrak{M}(X)$. On this case, if the semi-inner product is in fact an inner product we say that $K$ is integrally strictly positive definite (ISPD), and when is an inner product on the vector space of measures in $\mathfrak{M}(X)$ that $\mu(X)=0$, we say that $K$ is characteristic. If the kernel $K$ is real valued, it is sufficient to analyse the ISPD and characteristic property on real valued measures.
 
 When the kernel is characteristic we define the maximum mean discrepancy (MMD) as the metric on  the space of probability measures in $\mathfrak{M}(X)$ by 
 \begin{equation}\label{MMD}
 MMD(P,Q)_{K} := \sqrt{ \langle  K_{P} - K_{Q}, K_{P} - K_{Q}\rangle_{\mathcal{H}_{K}} }= \sqrt{\int_{X} \int_{X}K(x,y)d[P-Q](x)d[P-Q](y)}
 \end{equation}
 

As mentioned at the introduction, the focused of this paper is to analyse metrics on the space of probabilities using conditionally negative definite kernels.  We present a more general definition which will be useful to the analysis of the energy distance through the kernel $\|x-y\|^{\alpha}$, $\alpha >2$, defined  on a Hilbert space.
 
   \begin{defn}\label{P-PD} Let  $\gamma: X \times X \to \mathbb{C}$ be an Hermitian kernel and $P$ a finite dimensional space of functions from $X$ to $\mathbb{C}$. We say that $\gamma$ is $P$-conditionally positive definite ($P$-CPD) if for every finite quantity of points $x_{1}, \ldots , x_{n} \in X$ and scalars $c_{1}, \ldots, c_{n} \in \mathbb{C}$, under the restriction that $\sum_{i=1}^{n}c_{i}p(x_{i})=0$ for every $p \in P$, we have that
	$$
	\sum_{i,j=1}^{n}c_{i}\overline{c_{j}}\gamma(x_{i}, x_{j})\geq 0.
	$$
	\end{defn}
	
	This definition generalize the concepts of positive definite kernels ($P$ is the zero space) and CPD kernels ($P$ as the set of constant functions). The most important example is when $X$ is a finite dimensional Euclidean space and $P$ is the set of multivariable polynomials on $X$ with degree less than or equal to a constant $k \in  \mathbb{N}$, \cite{wendland} \cite{guo}, \cite{gelfand}. Sometimes it might be more convenient to work with the opposite sign on Definition \ref{P-PD}, on this case we say that the kernel is $P$-conditionally negative definite ($P$-CND).

In \cite{guo}, \cite{michdistance}, it is proved that a characterization for the continuous functions $\psi: [0, \infty) \to \mathbb{R}$,  such that the kernel 
$$ 
(x,y) \in \mathbb{R}^{m} \times \mathbb{R}^{m} \to \psi(\|x-y\|^{2}) \in \mathbb{R}
$$
is CPD for $P$ as the family of multivariable polynomials of degree less than  a fixed $\ell \in \mathbb{Z}_{+}$ (we denote this family by $\pi_{\ell-1}(\mathbb{R}^{m})$, where $\pi_{-1}(\mathbb{R}^{m})=\{0\}$ and $\pi_{0}(\mathbb{R}^{m})= \{\text{constant functions}\}$) for every $m \in \mathbb{N}$. A function $\psi$ satisfy this property  if and only if $\psi \in C^{\infty}(0, \infty)$ and $(-1)^{\ell}\psi^{(\ell)}$ is a completely monotone function on $(0, \infty)$. A function with this property can be uniquely written as
\begin{equation}\label{compleelltimes}
 \psi(t)= \int_{(0,\infty)} \frac{e^{-tr} - e_{\ell}(r)\omega_{\ell,\infty}(rt)}{r^{\ell}} d\lambda(r) + \sum_{k=0}^{\ell}a_{k}t^{k}
\end{equation}  
where $\lambda $ is a nonnegative Radon measure on $(0,\infty)$ (not necessarily with finite variation) with
$$
\omega_{\ell,\infty} (s):= \sum_{l=0}^{\ell-1}(-1)^{l}\frac{s^{l}}{l!}, \quad e_{\ell}(s):= e^{-s}\sum_{l=0}^{\ell-1}\frac{s^{l}}{l!}, \quad \int_{(0,\infty)}\min \{1, r^{-\ell}\}d\lambda(r)<\infty
$$
and $a_{k} \in \mathbb{R}$, $(-1)^{\ell}a_{\ell} \geq 0$ and $\omega_{0, \infty}$ is the zero function. For instance, the functions
\begin{enumerate}
    \item[$i)$] $(-1)^{\ell}t^{a} + p(t)$;
    \item[$ii)$] $(-1)^{\ell +1}t^{\ell}\log (t) +p(t)$;
    \item[$iii)$] $(-1)^{\ell}( c+t )^{a} +p(t)$;
    \item[$iv)$] $e^{-rt} + p(t)$, 
\end{enumerate}
 are elements of $CM_{\ell}$, for $\ell-1 < a \leq \ell$ , $c>0$ and $p \in \pi_{\ell-1}$. Those functions are not only in $CM_{\ell}$, but they are $\ell-1$ continuously differentiable on $[0, \infty)$ and we have a similar and simpler characterization compared to Equation \ref{compleelltimes} for them.

In general, a function $\psi \in CM_{\ell}$ is such that $\psi \in C^{\ell-1}([0, \infty))$ if and only if
\begin{equation}\label{compleelltimes2}
 \psi(t)= \int_{(0,\infty)} \frac{e^{-tr} - \omega_{\ell,\infty}(rt)}{r^{\ell}} d\eta(r) + \sum_{k=0}^{\ell}b_{k}t^{k}
\end{equation}  
where $\eta $ is a nonnegative Radon measure on $(0,\infty)$ (not necessarily with finite variation) with
$$
\omega_{\ell,\infty} (s):= \sum_{l=0}^{\ell-1}(-1)^{l}\frac{s^{l}}{l!}, \quad  \int_{(0,\infty)}\min \{1, r^{-\ell}\}d\eta(r)<\infty,
$$
 $b_{k} = \psi^{(k)}(0)/k!$ for $k < \ell$  and $(-1)^{\ell}b_{\ell} \geq 0$. 

Note that if a function $\psi \in CM_{\ell}$ then $\psi( \cdot +c) \in CM_{\ell} \cap C^{\ell-1}([0, \infty))$. On this case, the measure $\eta_{c}$ relative to  the decomposition given on Equation \ref{compleelltimes2} has finite variation  and satisfy $d\eta_{c+s}(r)=e^{-sr}d\eta_{c}(r)$ for every $c,s>0$. This property and the decomposition given on Equation \ref{compleelltimes2} are implicitly proved on Theorem $2.1$ on \cite{michdistance} and can also be found on Theorem $8.19$ of \cite{wendland}. We remark that a polynomial $p \in CM_{\ell}$ if and only if $p \in \pi_{\ell}(\mathbb{R})$ and the constant $(-1)^{\ell}p^{(\ell)}\geq 0$.

By Lemma $2.4$ in \cite{guo}, a function $\psi \in CM_{\ell}$ satisfies $|\psi(t)|\lesssim 1+t^{\ell}$ (this notation means that $|\psi(t)|/1+t^{\ell}$ is a bounded function). 

	\section{Conditionally positive definite kernels}\label{Conditionally positive definite kernels}

	The following known result states a connection between positive definite kernels and  $P$-CPD kernels \cite{wendland}. A Lagrange basis for $P$ is a basis $\{p_{1}, \ldots, p_{m} \}$ of $P$ and points $\xi_{1}, \ldots , \xi_{m} \in X$, such that $p_{i}(\xi_{j})= \delta_{i,j}$. A set of points $\xi_{1}, \ldots , \xi_{m} \in X$ is unisolvent with respect to a  $m$-dimensional space $P$ if the only function $p \in P$ such that $p(\xi_{i})=0$ for every $i$ is the zero function.

		\begin{thm}\label{pontriequi}Let   $\xi_{1}, \ldots , \xi_{m} \in X$ and $p_{1}, \ldots , p_{m}$ be a Lagrange basis  for a  finite dimensional space $P$ of functions from $X$ to $\mathbb{C}$. An Hermitian kernel $\gamma: X \times X \to \mathbb{C}$ is $P$-CPD if and only if the Hermitian kernel
		$$
		K_{\gamma}(x,y):= \gamma(x,y) - \sum_{k=1}^{m}p_{k}(x)\gamma(\xi_{k}, y) - \sum_{l=1}^{m}\overline{p_{l}(y)}\gamma(x, \xi_{l}) + \sum_{k,l=1}^{m}p_{k}(x)\overline{p_{l}(y)}\gamma(\xi_{k}, \xi_{l})
		$$
		is positive definite.
	    \end{thm}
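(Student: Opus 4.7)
The plan is to encode both conditions as nonnegativity of a single double integral, using the fact that the Lagrange basis lets us build, from any configuration of weights at points, a measure that annihilates every element of $P$.

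Concretely, given distinct points $x_{1},\ldots,x_{n}\in X$ and scalars $c_{1},\ldots,c_{n}\in\mathbb{C}$, I would define
$$
d_{k}:=\sum_{i=1}^{n}c_{i}p_{k}(x_{i}),\qquad \tilde{\mu}:=\sum_{i=1}^{n}c_{i}\delta_{x_{i}}-\sum_{k=1}^{m}d_{k}\delta_{\xi_{k}}.
$$
Using $p_{j}(\xi_{k})=\delta_{jk}$, one checks that $\sum_{i}c_{i}p_{j}(x_{i})-\sum_{k}d_{k}p_{j}(\xi_{k})=d_{j}-d_{j}=0$ for each basis element, hence $\tilde{\mu}(p)=0$ for every $p\in P$. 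Thus $\tilde{\mu}\in\mathcal{M}_{\delta}(X)$ (after combining coefficients of coincident points, if any) and it is exactly the kind of measure admissible in Definition \ref{P-PD}.

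Next I would expand the four terms in the definition of $K_{\gamma}$ against $c_{i}\overline{c_{j}}$ and rearrange using the identities $\sum_{i}c_{i}p_{k}(x_{i})=d_{k}$ and $\sum_{j}\overline{c_{j}}\,\overline{p_{l}(x_{j})}=\overline{d_{l}}$ to obtain the key identity
$$
\sum_{i,j=1}^{n}c_{i}\overline{c_{j}}\,K_{\gamma}(x_{i},x_{j})=\int_{X}\!\int_{X}\gamma(x,y)\,d\tilde{\mu}(x)\,d\overline{\tilde{\mu}}(y).
$$
With this identity in hand both implications become immediate. For the forward direction, if $\gamma$ is $P$-CPD and $\tilde{\mu}$ annihilates $P$, the right-hand side is nonnegative, so $K_{\gamma}$ is positive definite. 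For the converse, if instead the $c_{i}$ satisfy $\sum_{i}c_{i}p(x_{i})=0$ for every $p\in P$, then $d_{k}=0$ for all $k$, the measure $\tilde{\mu}$ reduces to $\sum_{i}c_{i}\delta_{x_{i}}$, and the identity collapses to $\sum_{i,j}c_{i}\overline{c_{j}}\gamma(x_{i},x_{j})=\sum_{i,j}c_{i}\overline{c_{j}}K_{\gamma}(x_{i},x_{j})\geq 0$, showing $\gamma$ is $P$-CPD.

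The only mildly delicate point, rather than a real obstacle, is bookkeeping when some $x_{i}$ happens to coincide with some $\xi_{k}$: one should treat $\tilde{\mu}$ as a genuine element of $\mathcal{M}_{\delta}(X)$ by summing weights at equal points, and note that Hermiticity of $\gamma$ and the definition of $P$-CPD are invariant under this collapsing, so the computation above remains valid.
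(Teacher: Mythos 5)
Your proposal is correct and is essentially the paper's own argument: your measure $\tilde{\mu}=\sum_{i}c_{i}\delta_{x_{i}}-\sum_{k}d_{k}\delta_{\xi_{k}}$ is exactly the augmented coefficient configuration $e_{i}$ the paper introduces (with $e_{i}=d_{i}$ at the original points and $e_{n+k}=-\sum_{i}d_{i}p_{k}(z_{i})$ at the $\xi_{k}$), and your key identity is the same algebraic identity the paper uses for both implications, with the constrained case $d_{k}=0$ giving the collapse to $\sum_{i,j}c_{i}\overline{c_{j}}K_{\gamma}(x_{i},x_{j})=\sum_{i,j}c_{i}\overline{c_{j}}\gamma(x_{i},x_{j})$. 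The measure-theoretic packaging and the explicit remark about coincident points are only cosmetic differences.
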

	This result can be easily seen by the fact that if $x_{1}, \ldots , x_{n} \in X$ and $c_{1}, \ldots, c_{n} \in \mathbb{C}$ are such that $\sum_{i=1}^{n}c_{i}p(x_{i})=0$ for every $p \in P$, then
	$$
	\sum_{i,j=1}^{n}c_{i}\overline{c_{j}}K_{\gamma}(x_{i}, x_{j}) = \sum_{i,j=1}^{n}c_{i}\overline{c_{j}} \gamma(x_{i}, x_{j}),
	$$
	and conversely, if   $z_{1}, \ldots , z_{m+n} \in X$ (with $z_{n+k}= \xi_{k}$)  and $d_{1}, \ldots, d_{m+n} \in \mathbb{C}$, then
		$$
	\sum_{i,j=1}^{m+n}d_{i}\overline{d_{j}}K_{\gamma}(z_{i}, z_{j}) = \sum_{ i, j =1}^{m+n}e_{ i}\overline{e_{j}}\gamma(z_{ i}, z_{j}), 
	$$
	where  $e_{ i}= d_{ i}$, for $ i \leq n$ and   $e_{ i}= -\sum_{i=1}^{n}d_{i}p_{ i-n}(z_{i})$, for $ i > n$.

	Similar to continuous positive definite kernels, continuous $P$-CPD kernels can be analyzed by its behaviour  on a certain type of space of measures.
	
	\begin{defn} Let $X$ be a Hausdorff space and $P \subset C(X)$ a finite dimensional vector space. We define the set
	$$
	\mathfrak{M}_{P}(X):=\{ \mu \in \mathfrak{M}(X), \quad \int_{X}|p(x)|d|\mu|(x)< \infty \text{ and } \int_{X}p(x)d\mu(x)=0 \text{ for every } p \in P \}.
	$$
	\end{defn}
	
	\begin{thm}\label{gensupport}  A continuous Hermitian  kernel  $\gamma: X \times X \to \mathbb{C}$  is   $P$-CPD if and only if for every $\mu \in 	\mathfrak{M}_{P}(X)$  for which $\gamma(x,y) \in L^{1}(|\mu|\times |\mu|)$ and $\gamma(x, \xi_{i}) \in L^{1}(|\mu|)$, where $(\xi_{i})_{1\leq i \leq m}$ is unisolvent, we have that
	$$
	\int_{X}\int_{X}\gamma(x,y)d\mu(x)d\overline{\mu}(y) \geq 0.
	$$
	\end{thm}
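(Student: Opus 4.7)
The natural approach is to reduce the $P$-CPD statement to the ordinary positive definite case via Theorem~\ref{pontriequi}. Fix a unisolvent set $\xi_1,\ldots,\xi_m$ and Lagrange basis $p_1,\ldots,p_m$; by that theorem, $\gamma$ is $P$-CPD if and only if the continuous Hermitian kernel $K_\gamma$ built from $\gamma$ and this data is positive definite. The proof then splits into (a) showing that for every $\mu \in \mathfrak{M}_P(X)$ the double integrals of $\gamma$ and of $K_\gamma$ against $\mu\otimes\bar\mu$ coincide, and (b) the classical fact that a continuous positive definite kernel integrated against $\mu\otimes\bar\mu$ yields a nonnegative number whenever the kernel lies in $L^1(|\mu|\times|\mu|)$.

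For the forward direction, assume $\gamma$ is $P$-CPD and take $\mu \in \mathfrak{M}_P(X)$ satisfying the stated integrability hypotheses. The key observation is that every correction term appearing in $K_\gamma$ vanishes when integrated against $\mu\otimes\bar\mu$, because each such term factors as a product containing either $\int_X p_k(x)\,d\mu(x)=0$ or $\overline{\int_X p_l(y)\,d\mu(y)}=0$. Hence
\[
\int_X\!\!\int_X K_\gamma(x,y)\,d\mu(x)\,d\bar\mu(y) \;=\; \int_X\!\!\int_X \gamma(x,y)\,d\mu(x)\,d\bar\mu(y),
\]
and the $L^1$ hypotheses on $\gamma(x,y)$ and on $\gamma(x,\xi_i)$, combined with $p_k \in L^1(|\mu|)$ (which is built into the definition of $\mathfrak{M}_P(X)$), ensure that $K_\gamma \in L^1(|\mu|\times|\mu|)$. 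The remaining task is nonnegativity of the left-hand side, which I would establish by discrete approximation: by inner regularity of $|\mu|$ and a dominated-convergence argument based on the $L^1$ hypothesis, first reduce to $\mu$ of compact support $C$; on $C\times C$ the kernel $K_\gamma$ is uniformly continuous, so one may take finite Borel partitions $\{A_i^{(n)}\}$ of $C$ of vanishing mesh with base points $x_i^{(n)} \in A_i^{(n)}$ and form $\mu_n:=\sum_i \mu(A_i^{(n)})\,\delta_{x_i^{(n)}} \in \mathcal{M}_\delta(X)$. Positive definiteness of $K_\gamma$ forces each Riemann-type sum $\int\!\int K_\gamma \,d\mu_n\,d\bar\mu_n$ to be nonnegative, and uniform continuity drives these sums to $\int\!\int K_\gamma\,d\mu\,d\bar\mu$. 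Making this limit rigorous in the presence of a kernel that need not be bounded is the main obstacle, and is where the $L^1$ assumption does the essential work.

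The converse direction is immediate. Given $x_1,\ldots,x_n \in X$ and $c_1,\ldots,c_n \in \mathbb{C}$ with $\sum_i c_i p(x_i)=0$ for every $p \in P$, the measure $\mu:=\sum_i c_i\,\delta_{x_i}$ lies in $\mathfrak{M}_P(X) \cap \mathcal{M}_\delta(X)$, and all the integrability hypotheses hold automatically by finiteness of support. The assumed integral inequality then yields
\[
0 \;\leq\; \int_X\!\!\int_X \gamma(x,y)\,d\mu(x)\,d\bar\mu(y) \;=\; \sum_{i,j=1}^n c_i\bar c_j\,\gamma(x_i,x_j),
\]
which is precisely the $P$-CPD condition.
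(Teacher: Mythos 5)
Your proposal is correct, and its skeleton is the one the paper uses: fix the Lagrange basis attached to the unisolvent points, pass to the kernel $K_{\gamma}$ of Theorem \ref{pontriequi}, observe that every correction term integrates to zero against $\mu\otimes\overline{\mu}$ because $\int_{X}p_{k}\,d\mu=0$, check $K_{\gamma}\in L^{1}(|\mu|\times|\mu|)$ from the hypotheses on $\gamma(x,\xi_{i})$ and on the $p_{k}$, and dispose of the converse with finitely supported measures. Where you genuinely diverge is the final step, the nonnegativity of $\int_{X}\int_{X}K_{\gamma}\,d\mu\,d\overline{\mu}$ for a continuous, possibly unbounded positive definite kernel in $L^{1}(|\mu|\times|\mu|)$: the paper just cites Lemma \ref{extmmddominio}, whose proof embeds the compactly truncated measures $\chi_{\mathcal{C}_{n}}\mu$ into the RKHS and shows the images form a Cauchy sequence, so the double integral is a squared norm; you instead sketch a direct discretization. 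Your route is more elementary and does go through: after restricting to a compact set $C$ (legitimate by Radon inner regularity plus dominated convergence, and note no moment condition on the restricted measure is needed since positive definiteness of $K_{\gamma}$ is unconditional), the kernel is bounded on $C\times C$, so the obstacle you flag at the end has already disappeared at that point. The one detail to repair is the phrase ``partitions of vanishing mesh'': $X$ is only Hausdorff, so there is no metric; instead use compactness of $C\times C$ to produce a finite Borel partition $\{A_{i}\}$ of $C$ on which the oscillation of $K_{\gamma}$ over each $A_{i}\times A_{j}$ is at most $\varepsilon$ (for instance via the unique uniformity of the compact set $C$), which yields $\bigl|\int_{C}\int_{C}K_{\gamma}\,d\mu\,d\overline{\mu}-\sum_{i,j}\mu(A_{i})\overline{\mu(A_{j})}K_{\gamma}(x_{i},x_{j})\bigr|\le\varepsilon\,|\mu|(C)^{2}$ and finishes the argument; alternatively, invoking Lemma \ref{extmmddominio}, which the paper proves precisely for this purpose, makes the step a one-liner.
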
	
If we restrict the measures on Theorem	\ref{gensupport} to those that $\gamma(x, \xi) \in L^{1}(|\mu|)$ for every $\xi \in X$, then the kernel $\gamma$ defines a semi-inner product on this vector space.

When $P$ is the space generated by a single function $p$, we can simplify  the assumptions of  Theorem \ref{gensupport}.

\begin{lem}\label{unidimcase} Let   $\gamma: X \times X \to \mathbb{C}$ be a continuous Hermitian  kernel  and $[p]=P\subset C(X)$ be a one dimensional vector space. Then, $\gamma$   is $P$-CPD if and only if for every $\mu \in 	\mathfrak{M}_{P}(X)$  for which $\gamma(x,y) \in L^{1}(|\mu|\times |\mu|)$ 
	$$
	\int_{X}\int_{X}\gamma(x,y)d\mu(x)d\overline{\mu}(y) \geq 0.
	$$
Additionally, if $p$ and $\gamma$ are real valued functions such that $p(x)\neq 0$  and the function $\gamma(x,x)/p^{2}(x)$ is bounded, the following assertions are equivalent:
\begin{enumerate}
    \item[$(i)$] $\gamma \in L^{1}(|\mu|\times |\mu|)$;
    \item[$(ii)$] The function $x \in X \to \gamma(x,z) \in L^{1}(|\mu|)$ for some $z \in X$;
    \item[$(iii)$] The function $x \in X \to \gamma(x,z) \in L^{1}(|\mu|)$ for every $z \in X$.
    \end{enumerate}
\end{lem}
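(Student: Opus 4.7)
The plan is to reduce Part~1 to Theorem~\ref{gensupport} by exploiting the freedom in choosing the Lagrange point $\xi$, and to handle Part~2 using the Cauchy--Schwarz inequality for the positive definite kernel $K_{\gamma}$ produced by Theorem~\ref{pontriequi}.

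For the sufficiency direction of Part~1, every finite sum $\mu=\sum_{i}c_{i}\delta_{x_{i}}$ with $\sum_{i}c_{i}p(x_{i})=0$ trivially satisfies the integrability hypothesis, so the double-integral inequality reduces at once to the $P$-CPD matrix condition. For necessity, given $\mu\in\mathfrak{M}_{P}(X)$ with $\gamma\in L^{1}(|\mu|\times|\mu|)$, I would split on whether $|\mu|(\{p\neq 0\})$ is positive. If it is, Fubini's theorem guarantees that the $|\mu|$-conull set $E:=\{y:\gamma(\cdot,y)\in L^{1}(|\mu|)\}$ intersects $\{p\neq 0\}$; picking $\xi\in E$ with $p(\xi)\neq 0$ and rescaling $p$ so that $p(\xi)=1$ makes $\{\xi\}$ unisolvent and places $\gamma(\cdot,\xi)$ in $L^{1}(|\mu|)$, so Theorem~\ref{gensupport} applies verbatim. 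If instead $|\mu|(\{p\neq 0\})=0$, then $\mu$ is concentrated on the closed set $\{p=0\}$, and every restriction $\mu_{n}:=\mu|_{K_{n}}$ to a compact $K_{n}$ exhausting $X$ in $|\mu|$-measure already lies in $\mathfrak{M}_{P}(X)$. For any fixed $\xi$ with $p(\xi)\neq 0$, $\gamma(\cdot,\xi)$ is bounded on $K_{n}$ by continuity, so Theorem~\ref{gensupport} yields $\int\int\gamma\,d\mu_{n}\,d\overline{\mu_{n}}\geq 0$, and dominated convergence with dominant $|\gamma|\in L^{1}(|\mu|\times|\mu|)$ transfers the inequality to $\mu$.

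For Part~2, (iii)$\Rightarrow$(ii) is trivial and (i)$\Rightarrow$(ii) is immediate from Fubini by choosing $z$ outside the exceptional null set. The substantive content is (ii)$\Rightarrow$(iii) and (iii)$\Rightarrow$(i), which I would handle uniformly via the decomposition
$$\gamma(x,z) = K_{\gamma}(x,z) + p(x)h(z) + p(z)h(x) - p(x)p(z)h(\xi),$$
where $h(x):=\gamma(x,\xi)$. For (ii)$\Rightarrow$(iii) I would set $\xi:=z_{0}$, normalized so that $p(\xi)=1$, which places $h\in L^{1}(|\mu|)$ directly by hypothesis; for (iii)$\Rightarrow$(i), any $\xi$ with $p(\xi)\neq 0$ suffices. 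In either case, the Cauchy--Schwarz bound $|K_{\gamma}(x,z)|\leq\sqrt{K_{\gamma}(z,z)}\sqrt{K_{\gamma}(x,x)}$, together with the expansion $K_{\gamma}(x,x) = \gamma(x,x) - 2p(x)h(x) + p(x)^{2}\gamma(\xi,\xi)$, the hypothesis $|\gamma(x,x)|\leq Mp(x)^{2}$, and the elementary inequality $2\sqrt{|ab|}\leq|a|+|b|$ yield the pointwise estimate
$$\sqrt{K_{\gamma}(x,x)}\leq C_{1}|p(x)|+C_{2}|h(x)|\in L^{1}(|\mu|).$$
The three remaining terms of the decomposition factor into products of $L^{1}(|\mu|)$-functions (either $|p|\cdot|h|$ or $|p|\cdot|p|$), so Fubini completes both implications.

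The main obstacle is the subcase of Part~1 where $\mu$ concentrates on $\{p=0\}$: a naive correction $\mu_{n}=\mu|_{K_{n}}-c_{n}\delta_{\xi}$ designed to restore $\int p\,d\mu_{n}=0$ fails because the cross-term $c_{n}\int\gamma(\cdot,\xi)\,d\mu|_{K_{n}}$ need not vanish in the limit, even though $c_{n}\to 0$. The resolution is that in this subcase no correction is necessary at all, since the moment condition is satisfied for free by any restriction of $\mu$, so plain compact truncation suffices; this is precisely why the one-dimensional case of $P$ admits the cleaner characterization stated in the lemma.
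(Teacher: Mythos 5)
Your proof is correct, and it differs from the paper's in instructive ways. For the first claim your main case coincides with the paper's: both pick a point $\xi$ with $p(\xi)\neq 0$ inside the $|\mu|$-conull set where $\gamma(\cdot,\xi)\in L^{1}(|\mu|)$ and invoke Theorem \ref{gensupport}. In the degenerate case the paper argues more directly: if no such $\xi$ exists, then $\mu$ is carried by the closed set $B=\{p=0\}$, on which $\gamma$ is unconditionally positive definite (the constraint $\sum_{i}c_{i}p(x_{i})=0$ is automatic there), so Lemma \ref{extmmddominio} gives the inequality in one step; your compact truncation plus dominated convergence reaches the same conclusion with slightly more machinery, and your closing remark correctly identifies why no corrective point mass is needed. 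For the second claim the routes genuinely differ: the paper divides by $p$, passing to $\beta(x,y)=\gamma(x,y)/(p(x)p(y))$ and to the finite measure $d\eta=p\,d\mu$, so that the bounded-diagonal hypothesis turns $d(x,y)=(\beta(x,x)+\beta(y,y)-2\beta(x,y))^{1/2}$ into a pseudometric and the three statements become $L^{2}$ conditions linked by the triangle (Minkowski) inequality --- the same mechanism recycled later in Lemma \ref{estimativa}; you instead keep $\gamma$ as it stands, expand it through the one-point Lagrange kernel $K_{\gamma}$ of Theorem \ref{pontriequi}, and control the cross terms by Cauchy--Schwarz together with the pointwise bound $\sqrt{K_{\gamma}(x,x)}\lesssim |p(x)|+|\gamma(x,\xi)|$. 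Your version is more hands-on and makes explicit the role of $\int |p|\,d|\mu|<\infty$ (built into $\mathfrak{M}_{P}(X)$ and used implicitly by the paper through the finiteness of $\eta$), while the paper's normalization is shorter and sets up its $L^{\theta}$ generalization; note also that, like the paper's own argument, your second part tacitly assumes $\gamma$ is $P$-CPD (so that $K_{\gamma}$ is positive definite), which is the intended reading of the statement.
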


As a direct consequence of the previous Lemma we obtain that if the function $\gamma(x,x)/p^{2}(x)$ is bounded, the set of measures on 	$\mathfrak{M}_{P}(X)$ that integrates $\gamma(x,y)$ is a vector space and the double integral defines a semi inner product on it. We focus on the CPD case and when $\gamma$ is real valued due to its relevance.

\begin{cor}\label{intequicnd} Let $\gamma: X \times X \to \mathbb{R}$ be a continuous CPD kernel  such that the function $\gamma(x,x)$ is bounded. The semi inner product 
$$
 (\mu ,\nu) \in \mathfrak{M}_{1}(X, \gamma)\times  \mathfrak{M}_{1}(X, \gamma)  \to I(\mu, \nu)_{\gamma}:= \int_{X}\int_{X}\gamma(x,y)d\mu(x)d\nu(y) \in \mathbb{R}
 $$
is well defined on the vector space 
$$ 
\mathfrak{M}_{1}(X, \gamma):=\{\eta \in  \mathfrak{M}(X), \quad \eta(X)=0 , \gamma \in L^{1}(|\eta|\times |\eta|) \}
$$
\end{cor}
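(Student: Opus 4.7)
The plan is to specialize Lemma \ref{unidimcase} to the one-dimensional space $P = [1]$ of constant functions. With $p \equiv 1$, $\mathfrak{M}_{P}(X) = \{\eta \in \mathfrak{M}(X) : \eta(X) = 0\}$ and $\gamma(x,x)/p(x)^{2} = \gamma(x,x)$ is bounded by hypothesis, so the equivalence of (i)--(iii) in that lemma applies: for real signed $\eta$ with $\eta(X) = 0$, $\gamma \in L^{1}(|\eta|\times|\eta|)$ if and only if $\gamma(\cdot, z) \in L^{1}(|\eta|)$ for every $z \in X$.

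To verify that $\mathfrak{M}_{1}(X,\gamma)$ is a vector space, closure under scalar multiplication is immediate. For $\mu, \nu \in \mathfrak{M}_{1}(X,\gamma)$, $(\mu+\nu)(X) = 0$ and $|\mu+\nu| \leq |\mu| + |\nu|$; extracting $\gamma(\cdot, z) \in L^{1}(|\mu|) \cap L^{1}(|\nu|)$ for every $z$ via the equivalence, and then applying the equivalence back to $\mu+\nu$, yields $\gamma \in L^{1}(|\mu+\nu|\times|\mu+\nu|)$, so $\mu + \nu \in \mathfrak{M}_{1}(X,\gamma)$.

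For the well-definedness of $I(\mu,\nu) = \int_{X}\int_{X} \gamma(x,y)\,d\mu(x)\,d\nu(y)$ as an \emph{absolutely} convergent integral, I would invoke Theorem \ref{pontriequi} with Lagrange basis $p_{1} \equiv 1$ and a single unisolvent point $\xi \in X$ to obtain the decomposition $\gamma(x,y) = K_{\gamma}(x,y) + \gamma(x,\xi) + \gamma(\xi,y) - \gamma(\xi,\xi)$, with $K_{\gamma}$ positive definite. The Cauchy--Schwarz-type bound $|K_{\gamma}(x,y)| \leq \sqrt{K_{\gamma}(x,x)}\sqrt{K_{\gamma}(y,y)}$ combined with $K_{\gamma}(x,x) \leq C + 2|\gamma(x,\xi)|$ (with $C$ absorbing the diagonal bound on $\gamma$ and the constant $\gamma(\xi,\xi)$), and a further Cauchy--Schwarz against the finite total variation $|\mu|(X)$ using $\gamma(\cdot,\xi) \in L^{1}(|\mu|)$ from (iii), shows $x \mapsto \sqrt{K_{\gamma}(x,x)} \in L^{1}(|\mu|)$, and likewise for $|\nu|$. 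The product integrals of each of the four resulting terms are then finite, giving $\gamma \in L^{1}(|\mu|\times|\nu|)$.

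Bilinearity of $I$ is immediate from linearity of the integral, symmetry follows from $\gamma$ being symmetric (Hermitian and real-valued), and $I(\mu,\mu) \geq 0$ is exactly the CPD direction of Lemma \ref{unidimcase} applied to $\mu \in \mathfrak{M}_{P}(X)$ with $\gamma \in L^{1}(|\mu|\times|\mu|)$. The main obstacle I expect is the cross-product absolute integrability in the third paragraph: the individual integrabilities on $|\mu|\times|\mu|$ and $|\nu|\times|\nu|$ do not tautologically give integrability on $|\mu|\times|\nu|$, and both the PD-kernel decomposition via Theorem \ref{pontriequi} and the boundedness of $\gamma(x,x)$ are essential in passing to the mixed product.
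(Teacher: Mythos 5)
Your proposal is correct and follows essentially the same route as the paper, which presents Corollary \ref{intequicnd} as a direct consequence of Lemma \ref{unidimcase} applied with $p\equiv 1$, exactly as you do. Your extra paragraph establishing $\gamma\in L^{1}(|\mu|\times|\nu|)$ via the decomposition of Theorem \ref{pontriequi} and the bound $|K_{\gamma}(x,y)|\leq\sqrt{K_{\gamma}(x,x)}\sqrt{K_{\gamma}(y,y)}$ is a detail the paper leaves implicit (it is the same mechanism as the $\sqrt{K(x,x)}\in L^{1}(|\mu|)$ condition in Lemma \ref{initialextmmddominio}), and it is carried out correctly.
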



On the next lemma we improve the condition $\sqrt{K(x,x)} \in L^{1}(|\mu|)$ and the set of measures analysed on Lemma \ref{initialextmmddominio}, at the cost of describing the function $K_{\mu}$ at the exception of a $|\mu|$ measure zero set.

\begin{lem}\label{extmmddominio} Let $K: X \times X \to \mathbb{C}$ be a continuous positive definite kernel. Let $\mu \in \mathfrak{M}(X)$  such that   $K(x,y) \in L^{1}(|\mu|  \times |\mu|)$, then the set of points 
$$
X_{\mu}:=\{ z \in X, \quad K( \cdot, z ) \in L^{1}(|\mu|)\}
$$
is such that $|\mu|(X - X_{\mu})=0$, and the function
$$
z \in X_{\mu} \to \int_{X} K(x,z)d\mu(x) \in \mathbb{C}
$$
is the restriction of an element $ K_{\mu} \in \mathcal{H}_{K}$. If $\eta$  is a measure   with the same conditions as the measure $\mu$ and $K \in L^{1}(\mu \times \eta )$, we have that
$$
\langle K_{\eta}, K_{\mu}\rangle_{\mathcal{H}_{K}}= \int_{X} \int_{X}k(x,y)d\eta(x)d\overline{\mu}(y).
$$ 
\end{lem}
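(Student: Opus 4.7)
The plan is a truncation argument that reduces everything to the hypotheses of Lemma \ref{initialextmmddominio}. For the first claim, Tonelli applied to $\int_X\int_X |K(x,y)|\, d|\mu|(x)\, d|\mu|(y) < \infty$ shows that $z \mapsto \int_X |K(x,z)|\, d|\mu|(x)$ is finite for $|\mu|$-almost every $z$, which is exactly $|\mu|(X \setminus X_\mu) = 0$.

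For the construction of $K_\mu$, set $A_n := \{x \in X : K(x,x) \leq n\}$ (a Borel set by continuity of $K$) and let $\mu_n$ be the restriction of $\mu$ to $A_n$. On $A_n$ one has $\sqrt{K(x,x)} \leq \sqrt{n}$, so Lemma \ref{initialextmmddominio} applies to $\mu_n$ and produces $K_{\mu_n} \in \mathcal{H}_K$ together with
$$
\|K_{\mu_n} - K_{\mu_m}\|_{\mathcal{H}_K}^{2} = \int_X\int_X K(x,y)\, d(\mu_n - \mu_m)(x)\, d\overline{(\mu_n - \mu_m)}(y).
$$
Expanding the right-hand side into four integrals of the form $\int_X\int_X K(x,y)\mathbf{1}_{A_n}(x)\mathbf{1}_{A_m}(y)\, d\mu(x)\, d\overline{\mu}(y)$ and invoking dominated convergence with majorant $|K| \in L^1(|\mu|\times |\mu|)$ --- and noting that $\mathbf{1}_{A_n}(x) \to 1$ for every $x$ since $K$ is finite on the diagonal --- each of the four terms tends to $\int_X\int_X K(x,y)\, d\mu(x)\, d\overline{\mu}(y)$. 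Hence $(K_{\mu_n})$ is Cauchy and converges in $\mathcal{H}_K$ to an element $K_\mu$.

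To identify $K_\mu$ with the stated integral on $X_\mu$, fix $z \in X_\mu$: norm convergence together with the reproducing property give $K_{\mu_n}(z) = \langle K_{\mu_n}, K_z\rangle_{\mathcal{H}_K} \to \langle K_\mu, K_z\rangle_{\mathcal{H}_K} = K_\mu(z)$, while dominated convergence with majorant $|K(\cdot,z)| \in L^1(|\mu|)$ gives $K_{\mu_n}(z) = \int_{A_n} K(x,z)\, d\mu(x) \to \int_X K(x,z)\, d\mu(x)$, and equating the two limits yields the desired formula.

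The inner product identity follows by the same scheme: truncate both measures, apply Lemma \ref{initialextmmddominio} to write $\langle K_{\eta_n}, K_{\mu_n}\rangle_{\mathcal{H}_K} = \int_X\int_X K(x,y)\, d\eta_n(x)\, d\overline{\mu_n}(y)$, and pass to the limit --- continuity of the inner product handles the left side, and dominated convergence with the hypothesis $K \in L^1(|\mu|\times|\eta|)$ handles the right. The main technical point is the Cauchy estimate: the joint integrability $K \in L^1(|\mu|\times|\mu|)$ is precisely what powers the four-term dominated convergence, and it is strictly weaker than the pointwise hypothesis $\sqrt{K(x,x)} \in L^1(|\mu|)$ required in Lemma \ref{initialextmmddominio}, which is the sense in which the present lemma genuinely extends the previous one.
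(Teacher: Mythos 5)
Your proof is correct and follows essentially the same route as the paper: truncate $\mu$, apply Lemma \ref{initialextmmddominio} to the truncated measures, show the resulting elements of $\mathcal{H}_{K}$ form a Cauchy sequence via dominated convergence with majorant $|K|\in L^{1}(|\mu|\times|\mu|)$, identify the limit pointwise on $X_{\mu}$ using that norm convergence in a RKHS implies pointwise convergence, and pass to the limit in the bilinear identity. The only difference is cosmetic: the paper truncates over nested compact sets furnished by the Radon (inner regularity) hypothesis, on which the continuous function $K(x,x)$ is automatically bounded, whereas you use the closed sublevel sets $\{x\in X:\ K(x,x)\le n\}$; both choices make $\sqrt{K(x,x)}$ integrable for the truncated (still Radon) measure, and the remainder of the argument is identical.
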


\section{Inner products defined by CND kernels and derivatives of completely monotone functions}\label{Inner products defined by CND kernels and derivatives of completely monotone functions}

Since all kernels that we deal on this Section are real valued, we simplify the writing by only focusing on real valued measures (which we still use the notation $\mathfrak{M}(X)$). As mentioned on Section $2$, this is not a restriction.  

In \cite{lyons2013}, it is proved that on a separable real Hilbert space $\mathcal{H}$, the bilinear function $I_{1/2}$ defined as 
$$
(\mu, \nu) \in \mathfrak{M}_{1}(\mathcal{H}) \times \mathfrak{M}_{1}(\mathcal{H}) \to I(\mu, \nu)_{1/2}:=  \int_{\mathcal{H}}\int_{\mathcal{H}} -\| x-y\|_{\mathcal{H}}d\mu(x)d\nu(y) 
$$
defines a inner product on the vector space
$$
\mathfrak{M}_{1}(\mathcal{H}):= \{ \eta \in \mathfrak{M}(\mathcal{H}), \quad \eta(\mathcal{H})=0 ,  \|x\| \in L^{1}(|\eta|)\}.
$$

The function $t \in [0, \infty) \to  \psi(t):=\sqrt{t} \in \mathbb{R}$ is an example of a Bernstein function, \cite{bers}. It is   continuous,  $\psi \in C^{\infty}((0, \infty))$ and $\psi^{\prime}$ is a completely monotone function on $(0, \infty)$ (we do not need to assume on our context that Bernstein functions are nonnegative). In other words, a function $\psi$ is a Bernstein function if and only if $-\psi \in CM_{1}$, and then it can be written, by   Equation \ref{compleelltimes2} for $\ell=1$, as
$$
-\sqrt{t} = \frac{1}{2\sqrt{\pi}}\int_{(0, \infty)}(e^{-rt}-1)\frac{1}{r^{3/2}}dr.
$$
So,
$$
(x,y) \in \mathcal{H} \times \mathcal{H} \to -\|x-y\|_{\mathcal{H}}= \frac{1}{2\sqrt{\pi}}\int_{(0, \infty)}(e^{-r\|x-y\|^{2}}-1)\frac{1}{r^{3/2}}dr,
$$
and this kernel is CPD.  The Gaussian kernels $e^{-r\|x-y\|^{2}}$, $r>0$,  are ISPD for every Hilbert space \cite{gaussinfi}, being so, by Fubini-Tonelli Theorem we have that if $\mu \in \mathfrak{M}(\mathcal{H})$ with $\mu(\mathcal{H})=0$ and $\|x\| \in L^{1}(|\mu|)$, then 
$$
  \int_{\mathcal{H}}\int_{\mathcal{H}} (-1)\| x-y\|_{\mathcal{H}}d\mu(x)d\mu (y)= \frac{1}{2\sqrt{\pi}}\int_{(0, \infty)} \left (  \int_{\mathcal{H}}\int_{\mathcal{H}} e^{-r\| x-y\|^{2}}d\mu(x)d\mu(y)  \right ) \frac{1}{r^{3/2}}dr \geq 0.
$$
Further, the double inner integral is  positive whenever $\mu$ is not the zero measure, implying that the final result is a positive number, which is the key argument in order to verify that $I_{1/2}$ is an inner product, thus reobtaining the main result of \cite{lyons2013} by a complete different argument. More generally, we have the following result. 

\begin{thm}\label{ber+cond} Let $\psi:[0, \infty) \to \mathbb{R}$ be a Bersntein function and $\gamma: X \times X \to [0, \infty)$ be a continuous CND kernel such that $x \to \gamma(x,x)$ is a bounded function. Consider the vector space 
$$
\mathfrak{M}_{1}(X; \gamma, \psi):= \{ \eta \in \mathfrak{M}(X), \quad   \psi(\gamma(x,y))\in L^{1}(|\eta|\times |\eta|) \text{ and } \eta(X)=0 \},
$$
then the function
$$
(\mu, \nu ) \in  \mathfrak{M}_{1}(X; \gamma, \psi) \times \mathfrak{M}_{1}(X; \gamma, \psi) \to I(\mu, \nu)_{\gamma, \psi}:=-\int_{X} \int_{X} \psi(\gamma(x,y))d\mu(x)d\nu(y)
$$
 defines an semi-inner product on  $\mathfrak{M}_{1}(X; \gamma, \psi)$. If $\psi$ is not a linear function and $2\gamma(x,y)=\gamma(x,x) + \gamma(y,y)$ only when $x=y$, then $I(\mu, \nu)_{\gamma, \psi}$ defines an inner product on  $\mathfrak{M}_{1}(X; \gamma, \psi)$.
\end{thm}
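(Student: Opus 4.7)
The natural approach is to use the integral representation of Bernstein functions coming from Equation \ref{compleelltimes2} applied to $-\psi \in CM_{1} \cap C([0,\infty))$ with $\ell=1$ (so $\omega_{1,\infty} \equiv 1$). This yields
\[
\psi(t) = \int_{(0,\infty)} \frac{1-e^{-rt}}{r}\, d\eta(r) + c_{0} + c_{1} t
\]
for a nonnegative Radon measure $\eta$ on $(0,\infty)$ with $\int \min\{1, 1/r\}\, d\eta < \infty$ and constants $c_{0}\in\mathbb{R}$, $c_{1}\geq 0$. The plan is to substitute this into $I(\mu,\nu)_{\gamma,\psi}$, apply Fubini--Tonelli, and exploit two facts: that $-\gamma$ is CPD (so Corollary \ref{intequicnd} applies) and that, by Schoenberg's theorem, $e^{-r\gamma}$ is a continuous bounded positive definite kernel for every $r>0$.

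For the Fubini exchange I would use the bounds $0 \leq (1-e^{-rt})/r \leq \min(t, 1/r)$. Splitting the $r$-integral at $r=1$, the tail is controlled by $|\mu|(X)|\nu|(X)\int_{(1,\infty)} r^{-1}\, d\eta(r) < \infty$, while the near-zero part is controlled by $\eta((0,1])\cdot \int\!\int \gamma\, d|\mu|\, d|\nu|$; the latter is finite because $\psi(t) \geq c_{0} + c_{1}t$ forces $\gamma \in L^{1}(|\mu|\times|\nu|)$ whenever $c_{1}>0$. Using $\mu(X)=\nu(X)=0$ to discard the constant-in-$(x,y)$ contributions gives
\[
I(\mu,\nu)_{\gamma,\psi} = c_{1}\left(-\int_{X}\!\int_{X} \gamma(x,y)\, d\mu(x)\, d\nu(y)\right) + \int_{(0,\infty)}\frac{1}{r}\left(\int_{X}\!\int_{X} e^{-r\gamma(x,y)}\, d\mu(x)\, d\nu(y)\right) d\eta(r).
\]
Both summands are nonnegative in the diagonal case $\mu=\nu$: the first by the CND property of $\gamma$ combined with the factor $c_{1}\geq 0$, the second because each inner integral is nonnegative by Lemma \ref{initialextmmddominio} applied to the bounded positive definite kernel $e^{-r\gamma}$, and $r^{-1}\, d\eta(r)$ is nonnegative. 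Bilinearity being clear, this establishes the semi-inner product property.

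For the strict (inner product) claim, I would start from the observation that $\tilde\gamma(x,y) := \gamma(x,y) - \gamma(x,x)/2 - \gamma(y,y)/2$ is CND with zero diagonal (the subtracted terms vanish under $\sum c_{i}=0$). By Schoenberg's embedding theorem there exist a real Hilbert space $\mathcal{H}$ and a continuous map $\phi\colon X \to \mathcal{H}$ with $\tilde\gamma(x,y) = \|\phi(x) - \phi(y)\|_{\mathcal{H}}^{2}$, and the hypothesis $2\gamma(x,y) = \gamma(x,x) + \gamma(y,y)$ only when $x=y$ is exactly the injectivity of $\phi$. Writing
\[
e^{-r\gamma(x,y)} = e^{-r\gamma(x,x)/2}\, e^{-r\gamma(y,y)/2}\, e^{-r\|\phi(x)-\phi(y)\|_{\mathcal{H}}^{2}},
\]
setting $d\tilde\mu(x) := e^{-r\gamma(x,x)/2}\, d\mu(x)$ (still a finite signed Radon measure, nonzero iff $\mu \neq 0$, since $\gamma(x,x)$ is bounded), and pushing forward by $\phi$, reduces $\int\!\int e^{-r\gamma}\, d\mu\, d\mu$ to a Gaussian double integral against $\phi_{*}\tilde\mu$ on $\mathcal{H}$. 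By the ISPD property of the Gaussian kernel on Hilbert space \cite{gaussinfi} and the injectivity of $\phi$, this is strictly positive whenever $\mu \neq 0$, for every $r>0$. Since $\psi$ not linear forces $\eta$ to be nonzero, and $r \mapsto \int\!\int e^{-r\gamma}\, d\mu\, d\mu$ is positive and continuous on $(0,\infty)$, the $\eta$-integral in the decomposition is strictly positive, giving $I(\mu,\mu)_{\gamma,\psi}>0$.

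The main obstacle I expect is the careful Fubini bookkeeping in the case $c_{1}=0$ when $\gamma$ itself is not $|\mu|\times|\nu|$-integrable, together with the measure-theoretic details needed to justify the pushforward by $\phi$ and the reduction to a genuine Gaussian kernel on $\mathcal{H}$.
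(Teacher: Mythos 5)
Your route is the paper's route: expand $-\psi$ by the $CM_{1}$ representation of Equation \ref{compleelltimes2}, treat the linear part through the CND property of $\gamma$ (Corollary \ref{intequicnd}), treat the integral part through positivity of $e^{-r\gamma(x,y)}$, and get strictness from the ISPD property of $e^{-r\gamma}$ plus uniqueness of the representation forcing $\eta\neq 0$ when $\psi$ is not linear. (Your hand-built Schoenberg embedding and pushforward for the strict case is exactly what the paper outsources to Theorem $4.2$ of \cite{gaussinfi}, so that part is fine modulo the image-measure bookkeeping you mention.)

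The genuine gap is the Fubini justification, and it sits exactly where you flag it and leave it open: $c_{1}=0$ with $\gamma\notin L^{1}(|\mu|\times|\nu|)$. Your control of the near-zero part of the $r$-integral by $\eta((0,1])\int\int\gamma\,d|\mu|\,d|\nu|$ needs $\gamma$ integrable, and in the motivating example ($\psi(t)=\sqrt{t}$, $\gamma=\|x-y\|^{2}$ on a Hilbert space, measures with finite first but infinite second moment) it is not; so the unproved case is not a corner case but precisely the energy-distance setting the theorem is for. The missing observation, which is how the paper closes this, is that no integrability of $\gamma$ is needed at all: in $\psi(t)=c_{0}+c_{1}t+\int_{(0,\infty)}\frac{1-e^{-rt}}{r}\,d\eta(r)$ both non-constant pieces are nonnegative, so $0\leq \int_{(0,\infty)}\frac{1-e^{-r\gamma(x,y)}}{r}\,d\eta(r)=\psi(\gamma(x,y))-c_{0}-c_{1}\gamma(x,y)\leq \psi(\gamma(x,y))-c_{0}$, and likewise $0\leq c_{1}\gamma(x,y)\leq \psi(\gamma(x,y))-c_{0}$. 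Since $\psi(\gamma)\in L^{1}$ by the definition of $\mathfrak{M}_{1}(X;\gamma,\psi)$ (and cross-integrability in $|\mu|\times|\nu|$ follows from the remark after the theorem, i.e.\ Lemma \ref{unidimcase} with $p\equiv 1$, or by reducing to $I(\mu,\mu)$ by bilinearity as the paper does), each piece of the decomposition is dominated by an integrable function; Tonelli applied to the nonnegative integrand against $|\mu|\times|\nu|\times\eta$ then gives a finite triple integral and Fubini legitimizes the interchange, with the $c_{1}$-term integrable for free whenever $c_{1}>0$. Replacing your $\min(t,1/r)$ domination by this one-sign argument closes the gap and makes your proof coincide with the paper's.
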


We emphasize that by Lemma \ref{unidimcase}   ($p$ is the constant $1$ function) $\psi(\gamma(x,y))\in L^{1}(|\eta|\times |\eta|)$ if and only if $x \to \psi(\gamma(x,z))\in L^{1}(|\eta|)$ for some (or every) $z \in X$.

For instance, if $X$ is a real Hilbert space $\mathcal{H}$, $\gamma(x,y)= \| x-y \|^{2}$ and $\psi(t)=t^{a/2}$, $0< a< 2$, then 
$$
(\mu, \nu ) \in  \mathfrak{M}_{1}(\mathcal{H}; t^{a/2}) \times  \mathfrak{M}_{1}(\mathcal{H}; t^{a/2}) \to I(\mu, \nu)_{ a/2}:=-\int_{\mathcal{H}} \int_{\mathcal{H}}  \|x-y\|^{a}d\mu(x)d\nu(y)
$$
defines an inner product on
$$
\mathfrak{M}_{1}(\mathcal{H}; t^{a/2}):= \{ \eta \in \mathfrak{M}(\mathcal{H}), \quad    \|x\|^{a} \in L^{1}(|\eta|) \text{ and } \eta(X)=0 \}.
$$

It is relevant to say that usually the inner product on Theorem \ref{ber+cond} is not complete (hence, $\mathfrak{M}_{1}(X; \gamma, \psi)$ is not a Hilbert space). For instance, on \cite{simon2018kernel} it is proved that the Gaussian kernel can be used to define an inner product on the space of tempered distributions on Euclidean spaces.

Another example occurs on the generalized real hyperbolic space. Let $\mathcal{H}$ be a Hilbert space and  define $\mathbb{H}:=\{(x,t_{x}) \in \mathcal{H}\times (0,\infty), \quad   t_{x}^{2} - \|x\|^{2} =1 \}$ be the real hyperbolic space relative to $\mathcal{H}$ and consider the kernel
$$
((x,t_{x}),(y,t_{y})) \in \mathbb{H} \times \mathbb{H} \to [(x,t_{x}),(y,t_{y})]:= t_{x}t_{y} - \langle x,y\rangle \in [1,\infty),
$$
which satisfies the relation  
$$
\cosh(d_{\mathbb{H}}((x,t_{x}),(y,t_{y}))) = [(x,t_{x}),(y,t_{y})],
$$
where $d_{\mathbb{H}}$ is a metric in $\mathbb{H}$. On \cite{farauthyper} or chapter $5$ in \cite{berg0}, it is proved that the metric $d_{\mathbb{H}}$  on $\mathbb{H}$ is a CND kernel, being so we can apply Theorem \ref{ber+cond} for the kernel $\gamma=d_{\mathbb{H}}$ and $\psi=t^{a/2}$, $0< a < 2$, then
$$
(\mu, \nu ) \in  \mathfrak{M}_{1}(\mathbb{H}; t^{a/2}) \times  \mathfrak{M}_{1}(\mathbb{H}; t^{a/2}) \to H(\mu, \nu)_{ a/2}:=-\int_{\mathbb{H}} \int_{\mathbb{H}}  d_{\mathbb{H}}(x,y)^{a/2}d\mu(x)d\nu(y)
$$
defines a inner product on
\begin{align*}
\mathfrak{M}_{1}(\mathbb{H}; t^{a/2}):= \{ \eta \in \mathfrak{M}(\mathbb{H}),& \quad   x \in \mathbb{H} \to d_{\mathbb{H}}(x,z)^{a/2} \in L^{1}(|\eta|)\\
&\text{for some (or every) }z \in \mathbb{H} \text{ and } \eta(\mathbb{H})=0 \}.
\end{align*}

We can also include the case $a=2$. A proof when $\mathbb{H}$ is finite dimensional was provided on \cite{lyons2014} using geometric properties of  hyperbolic spaces. Our proof relies on a Laurent type of approximation for the function $\arccosh(t)$. 

\begin{thm}\label{hyperinnprod} Let $\mathbb{H}$ be a real hyperbolic space, and consider the vector space
\begin{align*}
\mathfrak{M}_{1}(\mathbb{H}; t):= \{ \eta \in \mathfrak{M}(\mathbb{H}),& \quad   x \in \mathbb{H} \to d_{\mathbb{H}}(x,z) \in L^{1}(|\eta|)\\
&\text{for some (or every) }z \in \mathbb{H} \text{ and } \eta(\mathbb{H})=0 \}.
\end{align*}
Then
$$
(\mu, \nu ) \in  \mathfrak{M}_{1}(\mathbb{H}; t)\times  \mathfrak{M}_{1}(\mathbb{H}; t)\to H(\mu, \nu)_{ 1}:=-\int_{\mathbb{H}} \int_{\mathbb{H}}  d_{\mathbb{H}}(x,y) d\mu(x)d\nu(y)
$$
is an inner product.
\end{thm}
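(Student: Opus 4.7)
Bilinearity and symmetry of $H(\cdot,\cdot)_1$ are immediate, so the task is to prove $H(\mu,\mu)_1 \geq 0$ for $\mu \in \mathfrak{M}_1(\mathbb{H};t)$, with equality only when $\mu = 0$. The proof rests on a Laurent-type expansion of $\arccosh$: integrating the binomial expansion of $(s^2-1)^{-1/2} - s^{-1}$ from $s$ to $\infty$ and using $\arccosh(s) - \log(2s)\to 0$ yields $\arccosh(s) = \log(2s) - \sum_{k=1}^\infty c_k s^{-2k}$ for $s\geq 1$, with $c_k = \binom{2k}{k}/(4^k\cdot 2k) > 0$ and $\sum_k c_k = \log 2$ (evaluating at $s=1$). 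Substituting $s = [x,y]$ and integrating against $d\mu(x)\,d\mu(y)$, using $\mu(\mathbb{H}) = 0$ to kill the constant $\log 2$ and the bound $|[x,y]^{-2k}| \leq 1$ together with $\sum c_k < \infty$ to justify Fubini, one obtains
\[
H(\mu,\mu)_1 = -\int_{\mathbb{H}}\!\int_{\mathbb{H}} \log[x,y]\,d\mu(x)\,d\mu(y) + \sum_{k=1}^\infty c_k \int_{\mathbb{H}}\!\int_{\mathbb{H}} [x,y]^{-2k}\,d\mu(x)\,d\mu(y).
\]

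Next I would show that $[x,y]^{-s}$ is positive definite on $\mathbb{H}$ for every $s>0$ via the binomial series
\[
[x,y]^{-s} = \sum_{n=0}^\infty \binom{s+n-1}{n}\frac{\langle x,y\rangle_{\mathcal{H}}^{n}}{(t_xt_y)^{s+n}},
\]
convergent because $|\langle x,y\rangle_{\mathcal{H}}| \leq \|x\|\|y\| < t_xt_y$ on $\mathbb{H}$; each summand is a Schur product of the rank-one PD kernel $(t_xt_y)^{-(s+n)}$ with the PD tensor-power kernel $\langle x,y\rangle_{\mathcal{H}}^{n}$, weighted by $\binom{s+n-1}{n}\geq 0$. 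Hence the second summand above is nonnegative. For the $\log$ term I pass $s\to 0^+$ in $0 \leq s^{-1}\int\!\int [x,y]^{-s}\,d\mu(x)\,d\mu(y) = \int\!\int ([x,y]^{-s}-1)/s\,d\mu(x)\,d\mu(y)$ (using $\mu(\mathbb{H})=0$), with dominated convergence supplied by $|([x,y]^{-s}-1)/s| \leq \log[x,y] \leq d_{\mathbb{H}}(x,y) \in L^1(|\mu|\otimes|\mu|)$; the limit is $-\int\!\int \log[x,y]\,d\mu(x)\,d\mu(y) \geq 0$. Both summands are thus nonnegative and $H(\mu,\mu)_1\geq 0$.

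For strictness, if $H(\mu,\mu)_1 = 0$, both summands vanish; the PD series rewrites $\int\!\int [x,y]^{-2k}\,d\mu(x)\,d\mu(y)$ as a nonnegative sum $\sum_n \binom{2k+n-1}{n}\|\int x^{\otimes n}/t_x^{2k+n}\,d\mu\|^2$, so all tensor moments $\int x^{\otimes n}/t_x^{2k+n}\,d\mu = 0$ for $n\geq 0,\,k\geq 1$. A parallel expansion $-\log[x,y] = -\log t_x - \log t_y + \sum_{n\geq 1}\langle x,y\rangle_{\mathcal{H}}^{n}/(n(t_xt_y)^n)$, together with vanishing of the log integral and $\mu(\mathbb{H})=0$, supplies the $k=0$ cases for $n\geq 1$. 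Setting $u = 1/t_x^2 \in (0,1]$, these read $\int u^k (\langle x,h\rangle/t_x)^n\,d\mu = 0$ for all $k,n\geq 0$ and $h\in\mathcal{H}$; the classical moment problem on the compact interval $[0,1]$ kills the associated pushforward signed measures, so $\int g(1/t_x^2)\prod_i\langle x,h_i\rangle/t_x\,d\mu = 0$ for every $g\in C([0,1])$ and every finite tuple $(h_i)\subset\mathcal{H}$. This bounded, point-separating algebra is dense in $C(K)$ for every compact $K\subset\mathbb{H}$ by Stone--Weierstrass, which combined with Radon regularity of $\mu$ forces $\mu=0$. The main obstacle is this last step: transferring the finite-dimensional Stone--Weierstrass density to a global vanishing statement for $\mu$ in the (possibly infinite-dimensional) hyperbolic setting via cylindrical approximation and tightness of the Radon measure.
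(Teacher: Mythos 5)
Your nonnegativity argument is correct and follows essentially the paper's own route: the same Laurent-type expansion $\arccosh(s)=\log(2s)-\sum_{k\ge1}c_k s^{-2k}$ with $c_k=\binom{2k}{k}/(4^k\cdot 2k)$, and the same splitting of $H(\mu,\mu)_1$ into a logarithmic part plus the series of kernels $[x,y]^{-2k}$. The only differences in this half are that you prove positive definiteness of $[x,y]^{-s}$ directly by the binomial series (the paper cites \cite{gaussinfi}) and that you obtain $-\int\int\log[x,y]\,d\mu\,d\mu\ge 0$ by letting $s\to0^{+}$ in $([x,y]^{-s}-1)/s$ instead of quoting that $\log[x,y]$ is CND as in \cite{berg0}; both substitutions are sound (for the domination note $\log[x,y]\le d_{\mathbb{H}}(x,y)$, and for the later termwise manipulations one should record the bound $t_xt_y-|\langle x,y\rangle_{\mathcal{H}}|\ge 1$, which makes the series absolutely integrable).

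The genuine gap is in the definiteness half. The paper gets strictness in one line because \cite{gaussinfi} supplies that $[x,y]^{-2k}$ is integrally strictly positive definite on $\mathbb{H}$, so a single term of the series is already strictly positive for $\mu\neq0$. You instead attempt to prove this from scratch: from vanishing of all the double integrals you extract the moment identities $\int u^{k}(\langle x,h\rangle/t_x)^{n}\,d\mu=0$ and then invoke Stone--Weierstrass, but you stop exactly at the decisive step, namely passing from density of your algebra on compact sets to $\mu=0$ for a Radon measure on the possibly infinite-dimensional $\mathbb{H}$ -- and you acknowledge this. That unfinished step is not a technical afterthought: it is essentially the ISPD statement itself, i.e. precisely the content the paper outsources to \cite{gaussinfi}, so as written your proposal establishes only that $H(\cdot,\cdot)_1$ is a semi-inner product. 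The step can be closed: your generators $g(1/t_x^{2})$ and $\langle x,h\rangle/t_x$ form a point-separating subalgebra of $C_b(\mathbb{H})$ containing the constants, and such an algebra determines finite Radon (hence tight) signed measures -- embed $\mathbb{H}$ into the compact product $\prod_{a}[\inf a,\sup a]$ via evaluation, apply Stone--Weierstrass on the closure of the image, and use injectivity of the embedding together with inner regularity of $\mu^{\pm}$ to pull the vanishing of the pushforward back to $\mu=0$. Until an argument of this kind (or the citation of the ISPD property, as in the paper) is supplied, the claim that $H(\cdot,\cdot)_1$ is an inner product is unproved.
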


A different behaviour occurs on the generalized real spheres. Let $\mathcal{H}$ be a Hilbert space and  define $S^{\mathcal{H}}:=\{x \in \mathcal{H}, \quad    \|x\|=1 \}$ be the real sphere relative to $\mathcal{H}$. The kernel $d_{S^{\mathcal{H}}}$ defined on $S^{\mathcal{H}}$ by the relation  
$$
\cos (d_{S{\mathcal{H}}}(x,y))= \langle x,y \rangle_{\mathcal{H}}, \quad  x,y \in \mathcal{H}
$$
is a metric and defines a CND kernel as shown on \cite{gangolli}. However, unlikely the Hilbert space and the real hyperbolic space,   $d_{S^{\mathcal{H}}}$ is not a metric space of strong negative type, \cite{lyons2020strong}. Gangolli also proved on \cite{gangolli} that the metric on the other compact two-point homogeneous spaces  (real/complex/quaternionic projective spaces and the Cayley projective plane)  does not define a CND kernel.

The following Corollary of Theorem \ref{ber+cond}, connects the setting of metric spaces of strong negative type and the kernels on Theorem \ref{ber+cond}. 

\begin{cor}\label{ber+condcor} Let $\psi:[0, \infty) \to \mathbb{R}$ be a nonzero Bernstein function such that $\psi(0)=0$, \\ $\lim_{t \to \infty} \psi(t)/t =0$ and $(X, \gamma)$ is a metric space of negative type. Then, 
$$
(x,y) \in X \times X  \to D_{\psi, \gamma}(x,y):=\psi(\gamma(x,y))
$$
is a metric on $X$ and  $(X, D_{\psi, \gamma})$ is a metric space of strong negative type homeomorphic to $(X, \gamma)$.
\end{cor}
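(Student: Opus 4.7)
The plan is to verify in sequence: (a) $D_{\psi,\gamma}$ is a metric on $X$, (b) it induces the same topology as $\gamma$, and (c) the strong negative type property, which follows by direct application of Theorem~\ref{ber+cond}.

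For (a), the decisive facts about $\psi$ are extracted from the integral representation for $CM_{1}$ functions given in Equation~\ref{compleelltimes2}. Writing $-\psi$ in that form, the hypotheses $\psi(0)=0$ and $\lim_{t\to\infty}\psi(t)/t=0$ force $\psi(t)=\int_{(0,\infty)}(1-e^{-tr})/r\,d\eta(r)$ for a Radon measure $\eta$ on $(0,\infty)$ with $\int\min\{1,1/r\}\,d\eta<\infty$. From this I read off that $\psi\geq 0$ on $[0,\infty)$, that $\psi$ is continuous at $0$ by dominated convergence, and that $\psi$ is concave with $\psi'$ nonincreasing. Nonvanishing of $\psi$ forces $\eta\neq 0$, hence $\psi'>0$ on $(0,\infty)$ (a nonzero completely monotone function is strictly positive by real analyticity), so $\psi$ is strictly increasing and $D_{\psi,\gamma}(x,y)=0$ iff $x=y$. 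The triangle inequality $D_{\psi,\gamma}(x,y)\leq D_{\psi,\gamma}(x,z)+D_{\psi,\gamma}(z,y)$ reduces, via monotonicity of $\psi$ and the triangle inequality for $\gamma$, to the subadditivity $\psi(a+b)\leq\psi(a)+\psi(b)$, a standard consequence of concavity of $\psi$ together with $\psi(0)\geq 0$.

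For (b), continuity of $\psi$ at $0$ with $\psi(0)=0$ gives that $\gamma$-convergence implies $D_{\psi,\gamma}$-convergence; the converse uses strict monotonicity of $\psi$, which provides a continuous inverse near $0$. So the identity on $X$ is a homeomorphism $(X,\gamma)\to (X,D_{\psi,\gamma})$. For (c), I verify the hypotheses of Theorem~\ref{ber+cond} with $\psi$ and $\gamma$: $\gamma$ is continuous and CND (by hypothesis), $\gamma(x,x)=0$ is bounded, the diagonal condition $2\gamma(x,y)=\gamma(x,x)+\gamma(y,y)$ collapses to $x=y$, and $\psi$ is not linear because the only linear Bernstein function with $\psi(0)=0$ and vanishing slope at infinity is the zero function, which is excluded. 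Theorem~\ref{ber+cond} then furnishes an inner product on $\mathfrak{M}_{1}(X;\gamma,\psi)$. For Radon probabilities $P,Q$ with $D_{\psi,\gamma}(\cdot,z)\in L^{1}(P+Q)$ for some $z$, Lemma~\ref{unidimcase} places $\mu:=P-Q$ into $\mathfrak{M}_{1}(X;\gamma,\psi)$, and $I(\mu,\mu)_{\gamma,\psi}=0$ forces $\mu=0$. Finally, $D_{\psi,\gamma}=\psi\circ\gamma$ is itself CND, immediate from the integral representation, since each $1-e^{-r\gamma}$ is CND by Schoenberg's theorem and CND is preserved under nonnegative integration.

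The main technical hurdle is the systematic extraction of all required properties of $\psi$ (nonnegativity, continuity at $0$, strict monotonicity, concavity, subadditivity, nonlinearity) from the two quantitative hypotheses $\psi(0)=0$ and $\lim_{t\to\infty}\psi(t)/t=0$; once these are assembled, the corollary is a clean application of Theorem~\ref{ber+cond} and Lemma~\ref{unidimcase}.
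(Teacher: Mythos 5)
Your proposal is correct and follows essentially the same route as the paper: extract the representation $\psi(t)=\int_{(0,\infty)}\frac{1-e^{-tr}}{r}\,d\eta(r)$ (the hypotheses $\psi(0)=0$ and $\psi(t)/t\to 0$ killing the affine part), deduce the metric and topological claims from it, and obtain strong negative type by checking the hypotheses of Theorem~\ref{ber+cond}. The only cosmetic difference is that the paper proves the triangle inequality by subadditivity of $t\mapsto 1-e^{-rt}$ under the integral (and cites the Bernstein-function literature for the representation), whereas you use concavity and subadditivity of $\psi$ directly, and you spell out the definiteness and inverse-continuity points slightly more explicitly.
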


As an example of Corollary \ref{ber+condcor}, the Bersntein function $\psi(t)= \log (t+1)$, satisfies $\psi(0)=0$ and $\lim_{t \to \infty}\psi(t)/t =0$. In particular, on a Hilbert space $\mathcal{H}$ , $\log (\|x-y\| + 1 )$ is a metric on $\mathcal{H}$ that is homeomorphic with the Hilbertian topology and  this metric is of strong negative type. Interestingly we can apply  Corollary \ref{ber+condcor} again  in order to obtain that the same occurs with the metric $\log (\log (\|x-y\| + 1 ) + 1 )$. 

Returning to the kernel $(x,y) \in \mathcal{H}\times \mathcal{H} \to \|x-y\|^{a}$, we may ask ourselves what occurs when $a \geq 2$. 
The case $a=2$ is simpler, because
$$
-\int_{\mathcal{H}} \int_{\mathcal{H}}  \|x-y\|^{2}d\mu(x)d\nu(y)= 2\int_{\mathcal{H}} \int_{\mathcal{H}}  \langle x,y \rangle_{\mathcal{H}} d\mu(x)d\nu(y),
$$
for every $ \mu, \nu \in \mathfrak{M}_{1}(\mathcal{H}; t):=\{ \eta \in \mathfrak{M}(\mathcal{H}), \quad    \|x\|^{2} \in L^{1}(|\eta|) \text{ and } \eta(X)=0 \}$. This still defines a semi-inner product on  $\mathfrak{M}_{1}(\mathcal{H}; t)$, but the vector space
$$
\mathfrak{M}_{2}(\mathcal{H}; t) :=\{ \eta \in \mathfrak{M}_{1}(\mathcal{H}; t), \quad   \int_{\mathcal{H}}   \langle x,y \rangle_{\mathcal{H}}d\eta(x)=0, \text{ for every } y \in \mathcal{H}  \} \subset \mathfrak{M}_{1}(\mathcal{H}; t) 
$$
is equivalent to the zero measure on this inner product.  For an arbitrary measure $\eta \in \mathfrak{M}(\mathcal{H})$ such that $\|x\|^{2} \in L^{1}(|\eta|)$, the linear functional
$$
y \in \mathcal{H} \to   \int_{\mathcal{H}}   \langle x,y \rangle d\eta(x) \in \mathbb{R}
$$
is continuous, so there exists a vector $v_{\eta}$, which we call the vector mean of $\eta$, which represents the above continuous linear functional.

On the  case $a >2$, a different behaviour emerges. The double integral kernel does not define a semi-inner product on $\mathfrak{M}_{1}(\mathcal{H}, t^{a/2})$, however, if we restrict ourselves to the vector space space 
$$
\mathfrak{M}_{2}(\mathcal{H}; t^{a/2}) :=\{ \eta \in \mathfrak{M}(\mathcal{H}), \quad \|x\|^{a} \in L^{1}(|\eta|),  \eta(\mathcal{H})=0, v_{\eta}=0 \}
$$
for $2 < a < 4$ and using the representation given on Equation \ref{compleelltimes2} for the $CM_{2}$ function
$$
t^{a/2} = \frac{a(a -2)}{4\Gamma(2-a/2)}\int_{(0, \infty)}(e^{-rt} -1 + rt) \frac{1}{r^{a/2 +1}}dr,
$$
 by Fubini-Tonelli we obtain that if $\mu , \nu \in \mathfrak{M}_{2}(\mathcal{H}; t^{a/2})$
\begin{align*}
  \int_{\mathcal{H}}\int_{\mathcal{H}} \| x-y\|^{a}&d\mu(x)d\nu(y)\\
  &= \frac{a(a -2)}{4\Gamma(2-a/2)}\int_{(0, \infty)} \left (  \int_{\mathcal{H}}\int_{\mathcal{H}} e^{-r\| x-y\|^{2}}d\mu(x)d\nu(y)  \right ) \frac{1}{r^{a+1}}dr \geq 0.
\end{align*}

In particular, we can use the kernel $\| x-y\|^{a}$, $2<a < 4$, in order to define a metric on the space of Radon  probability measures on $\mathcal{H}$ with finite second moment,  but with a fixed vector mean.  

More generally, we have that.

 \begin{thm}\label{principal} Let $\ell \in \mathbb{N}$, $\psi: [0, \infty) \to \mathbb{R}$ be a continuous function on $CM_{\ell}$ and $\gamma: X \times X \to [0, \infty)$ be a continuous CND kernel such that $x \to \gamma(x,x)$ is a constant function. Consider the vector space 
\begin{align*}
\mathfrak{M}_{\ell}(X; \gamma, \psi):&= \{ \eta \in \mathfrak{M}(X), \quad   \psi(\gamma(x,y))\in L^{1}(|\eta|\times |\eta|) , \quad   \gamma(x,y)^{\ell}  \in L^{1}(|\eta|\times |\eta|)  \text{ and } \\
& \eta(X)=0, \quad  \int_{X}\int_{X}K_{-\gamma}(x,y)^{j}d\eta(x)d\eta(y)=0, \quad 1\leq j \leq \ell -1\}
\end{align*}
where $K_{-\gamma}$ is the  kernel in Theorem \ref{pontriequi},   then the function
$$
(\mu, \nu ) \in  \mathfrak{M}_{\ell}(X; \gamma, \psi)\times \mathfrak{M}_{\ell}(X; \gamma, \psi) \to I(\mu, \nu)_{\gamma, \psi}:=\int_{X} \int_{X} \psi(\gamma(x,y))d\mu(x)d\nu(y)
$$
 defines an semi-inner product on  $\mathfrak{M}_{\ell}(X; \gamma, \psi)$. If $\psi$ is not a polynomial of degree $\ell$ or less and  $2\gamma(x,y)=\gamma(x,x) + \gamma(y,y)$ only when $x=y$, then $I(\mu, \nu)_{\gamma, \psi}$ defines an inner product on  $\mathfrak{M}_{\ell}(X; \gamma, \psi)$. 
\end{thm}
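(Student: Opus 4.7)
The plan is to use Theorem~\ref{pontriequi} to embed the whole problem inside a Hilbert space, and then combine the integral representation \eqref{compleelltimes} of $\tilde\psi$ with Schoenberg's theorem and a Taylor-expansion argument at $r=0$.

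Fix $\xi\in X$ and let $K:=K_{-\gamma}$ be the PD kernel provided by Theorem~\ref{pontriequi} with $P$ the one-dimensional space of constants and Lagrange point $\xi$. Because $\gamma(x,x)\equiv\gamma_0$ is constant, a direct calculation gives $\gamma(x,\xi)=\gamma_0+K(x,x)/2$ and therefore
$$
\gamma(x,y)=\gamma_0+\tfrac{1}{2}\|\Phi(x)-\Phi(y)\|^{2}_{\mathcal{H}_K},
$$
where $\Phi(x):=K(\cdot,x)$ is the canonical feature map. Setting $\tilde\psi(s):=\psi(\gamma_0+s/2)$ one checks $\tilde\psi\in CM_\ell$ (composition with a positive affine transformation preserves the class). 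Push $\mu,\nu\in\mathfrak{M}_\ell$ forward to signed Radon measures $F:=\Phi_*\mu$, $G:=\Phi_*\nu$ on $\mathcal{H}_K$, so that $I(\mu,\nu)_{\gamma,\psi}=\int\!\int\tilde\psi(\|u-v\|^{2})\,dF(u)\,dG(v)$. The identity $\int\!\int K^j\,d\mu\,d\mu=\int\!\int\langle u,v\rangle^j\,dF\,dF=\|M_j(F)\|^2$, with $M_j(F)$ the $j$-th moment tensor, translates the defining conditions of $\mathfrak{M}_\ell$ into: $F(\mathcal{H}_K)=0$ and every polynomial on $\mathcal{H}_K$ of total degree $\leq\ell-1$ integrates to zero against $F$ (and likewise for $G$).

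Next I would substitute \eqref{compleelltimes} for $\tilde\psi$ and apply Fubini-Tonelli. The bound $|\tilde\psi(s)|+\sum_k|\tilde a_k|s^k\lesssim 1+s^\ell$ together with the hypothesis $\gamma^\ell\in L^1(|\mu|\times|\mu|)$ — passed through Lemma~\ref{unidimcase} and the lower bound $\gamma(x,z_0)\geq c_1\|\Phi(x)\|^2-c_2$ for some constants $c_1>0$, $c_2$ — supplies the required absolute integrability. The central arithmetic step is
$$
\int\!\!\int\|u-v\|^{2k}\,dF(u)\,dG(v)=0,\qquad 0\leq k\leq\ell-1,
$$
proved by expanding $\|u-v\|^{2k}=(\|u\|^2+\|v\|^2-2\langle u,v\rangle)^k$ via the multinomial theorem: each summand factors as $\bigl[\int\|u\|^{2a}\langle u,\cdot\rangle^c dF\bigr]\cdot\bigl[\int\|v\|^{2b}\langle v,\cdot\rangle^c dG\bigr]$ with $a+b+c=k$, and since $\min(2a+c,\,2b+c)\leq a+b+c\leq\ell-1$ at least one of the two univariate factors is the integral of a polynomial of degree $\leq\ell-1$, which vanishes. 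Consequently every contribution from $\omega_{\ell,\infty}$ and from the polynomial coefficients $\tilde a_0,\ldots,\tilde a_{\ell-1}$ disappears and one is left with
$$
I(\mu,\nu)_{\gamma,\psi}=\int_{(0,\infty)}\frac{1}{r^\ell}\Big(\int\!\!\int e^{-r\|u-v\|^{2}}\,dF\,dG\Big)d\tilde\lambda(r)+\tilde a_\ell\int\!\!\int\|u-v\|^{2\ell}\,dF\,dG.
$$

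For $F=G$ each summand is non-negative. The first is $\geq 0$ because $e^{-r\|u-v\|^2}$ is PD on $\mathcal{H}_K$ by Schoenberg. For the second, set $g(r):=\int\!\int e^{-r\|u-v\|^2}\,dF\,dF$; then $g\geq 0$ on $(0,\infty)$, $g$ is $C^\ell$ by dominated convergence under the $\|u\|^{2\ell}$-integrability, and $g^{(k)}(0)=(-1)^k\int\!\int\|u-v\|^{2k}\,dF\,dF=0$ for $k\leq\ell-1$ by the previous step; a Taylor expansion at $r=0$ therefore forces $g^{(\ell)}(0)\geq 0$, and combined with $(-1)^\ell\tilde a_\ell\geq 0$ this yields $\tilde a_\ell\int\!\int\|u-v\|^{2\ell}\,dF\,dF=(-1)^\ell\tilde a_\ell\,g^{(\ell)}(0)\geq 0$. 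For strictness note that $2\gamma(x,y)=\gamma(x,x)+\gamma(y,y)$ is exactly $\Phi(x)=\Phi(y)$, so the diagonal hypothesis amounts to injectivity of $\Phi$; if $\psi$ is not a polynomial of degree $\leq\ell$ then $\tilde\lambda$ is non-zero, so $I(\mu,\mu)=0$ forces $\int\!\int e^{-r\|u-v\|^2}\,dF\,dF=0$ for $\tilde\lambda$-a.e.\ $r$, and the ISPD property of Gaussian kernels on Hilbert spaces \cite{gaussinfi} gives $F=0$, whence $\mu=0$ by injectivity of $\Phi$. The main obstacle will be the rigorous handling of the Fubini swap together with the $\ell$-fold differentiability of $g$ at the origin, both of which require carefully leveraging the integrability assumptions in $\mathfrak{M}_\ell$.
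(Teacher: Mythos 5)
Your proposal is correct in substance and follows the same overall architecture as the paper's proof: embed $X$ into a Hilbert space through $K_{-\gamma}$ (the paper does this in Corollary \ref{hilbertcondkcor} via $\gamma(x,y)=\|T(x)-T(y)\|^{2}+c$), decompose $\psi$ by Equation \ref{compleelltimes}, kill the terms of degree $\leq \ell-1$ using the moment conditions and the infinite-dimensional multinomial theorem (your factorization of $\int\!\int\|u\|^{2a}\|v\|^{2b}\langle u,v\rangle^{c}\,dF\,dG$ with $\min(2a+c,2b+c)\leq\ell-1$ is exactly the cross-term argument of Lemma \ref{hilbertcondk}), and conclude with Fubini--Tonelli, the positive definiteness of the Gaussian, and uniqueness of the representation plus the ISPD property for strictness. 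Where you genuinely diverge is the sign of the top-degree term: the paper proves $(-1)^{\ell}\int\!\int\|x-y\|^{2\ell}\,d\mu\,d\mu\geq0$ by explicitly identifying the surviving multinomial terms as double integrals of the positive definite kernels $\langle x,y\rangle^{\ell-2l}\|x\|^{2l}\|y\|^{2l}$, whereas you obtain it softly from the one-sided Taylor expansion at $r=0$ of $g(r)=\int\!\int e^{-r\|u-v\|^{2}}\,dF\,dF$, using $g\geq0$, $g^{(k)}(0)=0$ for $k\leq\ell-1$, and $g^{(\ell)}(0)=(-1)^{\ell}\int\!\int\|u-v\|^{2\ell}\,dF\,dF$. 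This Taylor argument is valid (the $\|u\|^{2\ell}$-integrability gives $g\in C^{\ell}([0,\infty))$ with one-sided derivatives at $0$, so $g(r)/r^{\ell}\to g^{(\ell)}(0)/\ell!$) and is arguably slicker than the paper's computation, at the price of needing the same moment identities anyway; the paper's route has the advantage of also yielding the explicit nonnegative expansion recorded in Lemma \ref{hilbertcondk}.

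Two points you flag but do not settle deserve emphasis. First, the Fubini swap is not covered by the growth bound $|\tilde\psi(s)|\lesssim 1+s^{\ell}$ alone: near $r=0$ the terms $e^{-rs}/r^{\ell}$ and $e_{\ell}(r)\omega_{\ell,\infty}(rs)/r^{\ell}$ are separately non-integrable against $\lambda$, and one needs the joint pointwise estimate $|e^{-rs}-e_{\ell}(r)\omega_{\ell,\infty}(rs)|\leq M r^{\ell}(1+s^{\ell})\min\{1,r^{-\ell}\}$, which is precisely the paper's Lemma \ref{change}; without it your displayed reduction of $I(\mu,\nu)_{\gamma,\psi}$ is not yet justified. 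Second, your strictness step concludes $\mu=0$ from $\Phi_{*}\mu=0$ and injectivity of $\Phi$; this is fine but needs the small Radon-measure argument (take compacts $C$, use $\Phi^{-1}(\Phi(C))=C$ and inner regularity), or can be bypassed entirely, as the paper does, by invoking the ISPD property of $e^{-r\gamma(x,y)}$ on $X$ itself from \cite{gaussinfi}.
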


From Equation  \ref{compleelltimes2} and the fact that $(-1)^{\ell}[e^{-tr} - \omega_{\infty, \ell}(rt)] \geq 0$, for every $t,r \geq 0$ and $\ell \in\mathbb{N}$ (this can be easily proved by induction on $\ell$), if $\psi \in CM_{\ell}$ on Theorem  \ref{principal} also belongs $C^{\ell-1}[0, \infty)$, we may lower the requirement $\gamma^{\ell} \in L^{1}(|\eta|\times |\eta|)$ to $\gamma^{\ell-1} \in L^{1}(|\eta|\times |\eta|)$ on the definition of  $\mathfrak{M}_{\ell}(X; \gamma, \psi)$.  

The fact that  we required additional properties on the function $\gamma(x,x)$  on Theorem \ref{principal}  compared to Theorem \ref{ber+cond}, is  related to the fact that the integrals  $\int_{X}\int_{X}\gamma(x,y)^{j}d\eta(x)d\eta(y)$ are difficult to analyse on the general setting of  Theorem \ref{ber+cond}. However, if  $\psi \in CM_{\ell}$ on Theorem  \ref{principal} also belongs $C^{\ell-1}[0, \infty)$ and all of its derivatives up to $\ell -1$ are  zero at the point $0$, then there is no polynomial part on Equation \ref{compleelltimes2}, and on this case we may only assume that $x \to \gamma(x,x)$ is a bounded function on Theorem  \ref{principal}. This is the case for the function $(-1)^{\ell}t^{a}$, $2(\ell -1) < a < 2\ell$.

As an example of Theorem \ref{principal}, if $X$ is a Hilbert space $\mathcal{H}$, $\gamma(x,y)= \| x-y \|^{2}$ and $\psi(t)=(-1)^{\ell}t^{a/2}$, $2(\ell-1)<a< 2\ell$, $\ell \in \mathbb{N}$ then 
$$
(\mu, \nu ) \in  \mathfrak{M}_{\ell}(\mathcal{H}; t^{a/2}) \times  \mathfrak{M}_{\ell}(\mathcal{H}; t^{a/2}) \to I(\mu, \nu)_{a/2}:=\int_{\mathcal{H}} \int_{\mathcal{H}}  (-1)^{\ell}\|x-y\|^{a}d\mu(x)d\nu(y)
$$
defines a inner product on the vector space
\begin{align*}
 \mathfrak{M}_{\ell}(\mathcal{H}; t^{a/2}):&= \{ \mu \in \mathfrak{M}(\mathcal{H}), \quad    \|x\|^{a} \in L^{1}(|\mu|), \quad \mu(\mathcal{H})=0, \text{ and } \\
 &  \quad \int_{\mathcal{H}}   \langle x,y_{1} \rangle \ldots \langle x,y_{j} \rangle d\mu(x)=0, \quad y_{1}, \ldots , y_{j} \in \mathcal{H} \quad 1\leq j \leq \ell-1 \}.
\end{align*}

Theorem  \ref{ber+cond}   and Theorem  \ref{principal}   on the case where $X$ is an Euclidean space $\mathbb{R}^{m}$  and $\gamma(x,y)= \|x-y\|^{2}$ were proved on \cite{mattner1997strict}.

\section{Space of functions defined by  derivatives of completely monotone functions}\label{Equimeasurability for derivatives of completely monotone functions}
As mentioned in \cite{lyons2013}, the fact that the energy distance defines a metric on a separable Hilbert space can be proved using the proposed method, but also follows as a consequence of the fact that if $\mathcal{H}$ is a separable Hilbert space,  then a measure $\mu \in \mathfrak{M}(\mathcal{H})$    such that $\|x\|^{a} \in L^{1}(|\mu|)$, $  a \in (0, \infty) \setminus 2\mathbb{N}$, satisfies
\begin{equation}\label{energydistance}
\int_{\mathcal{H}}\|x-y\|^{a}d\mu(x)=0 , \quad y \in \mathcal{H}
\end{equation}
if and only if $\mu$ is the zero measure, proved in   \cite{linde1986}, \cite{Koldobskii1987}.

On \cite{gaussinfi} it is proved that if $\psi \in CM_{0}$ and is not a constant function, then 
$$
\int_{\mathcal{H}}\psi(\|x-y\|^{2})d\mu(x)=0 , \quad y \in \mathcal{H}
$$
if and only if $\mu$ is the zero measure. In this section we prove  similar results on a much broader setting,  as a consequence of the results presented on Section \ref{Inner products defined by CND kernels and derivatives of completely monotone functions}. 

\begin{thm}\label{kobolnewneural}Let $\mathcal{H}$ be an infinite dimensional Hilbert space, $\ell \in \mathbb{Z}_{+}$ and $\phi ,\varphi  \in CM_{\ell}$.  If a measure  $ \mu \in \mathfrak{M}(\mathcal{H})$  such that $\|x\|^{2\ell} \in L^{1}(|\mu|)$ satisfies 
$$
 \int_{\mathcal{H}}\psi(\|x-y\|^{2})  d\mu(x)=0 \quad y \in \mathcal{H},
$$
where $\psi:=\phi - \varphi$ then it must hold that
$$
 \int_{\mathcal{H}}\psi(\|x-y\|^{2} +c)  d\mu(x)=0 \quad y \in \mathcal{H}, c\geq 0.
$$
In addition,  (even if $\mathcal{H}$ is not infinite dimensional),  $\psi$ is not a polynomial if and only if the only measure  $ \mu \in \mathfrak{M}(\mathcal{H})$  such that $\|x\|^{2\ell} \in L^{1}(|\mu|)$ satisfies 
$$
 \int_{\mathcal{H}}\psi(\|x-y\|^{2}+c)  d\mu(x)=0 \quad y \in \mathcal{H}, c\geq 0
$$ 
is the zero measure. 
\end{thm}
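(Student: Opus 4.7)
The plan is in two stages.  Stage~(I) upgrades the pointwise hypothesis at $c=0$ to the full shifted identity using an approximation of $\mu$ by finitely-supported measures together with the infinite-dimensionality of $\mathcal{H}$.  Stage~(II) uses the integral representation of $\psi$ from Equation~\ref{compleelltimes} together with the ISPD property of the Gaussian kernel to deduce $\mu=0$ when $\psi$ is non-polynomial.

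For Stage~(I), I would first approximate $\mu$ by finitely-supported signed measures $\mu^{(n)}=\sum_{i=1}^{k_n}c_i^{(n)}\delta_{x_i^{(n)}}$ with respect to the weighted total-variation norm $\|\eta\|_w:=\int(1+\|z\|^{2\ell})\,d|\eta|(z)$; this is possible by inner regularity of $\mu$ and Dirac-combination approximation of $\mu$ restricted to large compact sets, using the integrability $\|x\|^{2\ell}\in L^{1}(|\mu|)$.  Fix $y\in\mathcal{H}$ and $c\geq 0$.  Since $\mathcal{H}$ is infinite-dimensional, the orthogonal complement of the finite-dimensional subspace spanned by $\{x_i^{(n)}-y\}_{i=1}^{k_n}$ is nontrivial, so we may pick a unit vector $e_n\in\mathcal{H}$ orthogonal to every $x_i^{(n)}-y$.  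Then $\|x_i^{(n)}-y-\sqrt{c}\,e_n\|^2=\|x_i^{(n)}-y\|^2+c$ for every $i$, giving
\[
\int\psi(\|z-y\|^2+c)\,d\mu^{(n)}(z)=\int\psi(\|z-(y+\sqrt{c}\,e_n)\|^2)\,d\mu^{(n)}(z).
\]
The pointwise hypothesis applied at $w_n:=y+\sqrt{c}\,e_n\in\mathcal{H}$ gives $\int\psi(\|z-w_n\|^2)\,d\mu(z)=0$, so the right-hand side above equals $\int\psi(\|z-w_n\|^2)\,d[\mu^{(n)}-\mu](z)$, whose absolute value is bounded by a constant (depending on $y,c$) times $\|\mu^{(n)}-\mu\|_w$ (using $|\psi(t)|\lesssim 1+t^{\ell}$), and so tends to zero.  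The same weighted convergence gives $\int\psi(\|z-y\|^2+c)\,d\mu^{(n)}(z)\to\int\psi(\|z-y\|^2+c)\,d\mu(z)$, producing the shifted identity.

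For Stage~(II), apply Equation~\ref{compleelltimes} to each of $\phi,\varphi$ and subtract to obtain
\[
\psi(t)=\int_{(0,\infty)}\frac{e^{-tr}-e_{\ell}(r)\,\omega_{\ell,\infty}(rt)}{r^{\ell}}\,d\sigma(r)+P(t),
\]
where $\sigma:=\lambda_\phi-\lambda_\varphi$ is a signed Radon measure on $(0,\infty)$ and $P$ is a polynomial of degree $\leq\ell$; "$\psi$ is not a polynomial" is equivalent to $\sigma\not\equiv 0$.  Substituting $t=\|x-y\|^2+c$ and applying Fubini (justified by $\|x\|^{2\ell}\in L^{1}(|\mu|)$ and the integrability condition on $\sigma$), the shifted identity reads
\[
0=\int_{(0,\infty)}e^{-rc}\,f_\mu(r,y)\,r^{-\ell}\,d\sigma(r)+Q(y,c),\qquad c\geq 0,\ y\in\mathcal{H},
\]
with $f_\mu(r,y):=\int e^{-r\|x-y\|^2}\,d\mu(x)$ and $Q(y,\cdot)$ a polynomial in $c$ of degree $\leq\ell$.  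Differentiating $\ell+1$ times in $c$ eliminates $Q$, and then uniqueness of Laplace transforms of signed measures on $(0,\infty)$ forces $f_\mu(r,y)=0$ for $|\sigma|$-a.e.\ $r$ and every $y\in\mathcal{H}$.  Since $\sigma\neq 0$ there is some $r_0>0$ with $f_\mu(r_0,\cdot)\equiv 0$ on $\mathcal{H}$, and the ISPD property of the Gaussian kernel on arbitrary Hilbert spaces forces $\mu=0$.  Conversely, if $\psi$ is a polynomial of degree $\leq\ell$, then $\psi(\|x-y\|^2+c)$ is a polynomial in $x$ of total degree $\leq 2\ell$, so any nonzero signed measure with vanishing polynomial moments up to order $2\ell$ --- e.g.\ a suitable alternating combination of Diracs --- satisfies the shifted identity trivially.

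The main technical obstacle is the weighted approximation of $\mu$ by Dirac combinations in Stage~(I) (requiring careful use of inner regularity and dominated convergence with the weight $1+\|z\|^{2\ell}$), together with the differentiation-in-$c$ step in Stage~(II), which isolates the Gaussian contribution from the polynomial part by matching degrees.
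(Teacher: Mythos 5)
Your Stage (I) contains a genuine gap: you cannot approximate a general Radon measure $\mu$ by finitely supported signed measures in a (weighted) total variation norm. If $\mu$ has a non\-atomic part, then any purely atomic $\mu^{(n)}$ is mutually singular with that part, so $\|\mu-\mu^{(n)}\|_{w}$ is bounded below by the weighted variation of the non\-atomic part and never tends to zero. Dirac combinations are dense only in weak-type senses (pointwise convergence on Borel sets, or testing against fixed continuous functions), and that weaker convergence does not rescue your estimate, because the test function $z\mapsto\psi(\|z-w_n\|^{2})$ changes with $n$ (through $e_n$, hence through $\mu^{(n)}$) and is unbounded; so the key bound $\bigl|\int\psi(\|z-w_n\|^{2})\,d[\mu^{(n)}-\mu](z)\bigr|\lesssim\|\mu^{(n)}-\mu\|_{w}$ has no valid starting point. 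The repair is much simpler and needs no approximation of $\mu$ at all (this is the paper's argument): take an orthonormal sequence $(e_{\iota})_{\iota\in\mathbb{N}}$; for fixed $x,y$ Bessel's inequality gives $\langle x-y,e_{\iota}\rangle\to 0$, hence $\|x-y-\sqrt{c}\,e_{\iota}\|^{2}\to\|x-y\|^{2}+c$ pointwise in $x$, and since $|\psi(t)|\lesssim 1+t^{\ell}$ and $\|x\|^{2\ell}\in L^{1}(|\mu|)$, dominated convergence applied to the hypothesis at the points $y+\sqrt{c}\,e_{\iota}$ yields the shifted identity directly.

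Your Stage (II) follows a route genuinely different from the paper's (the paper reduces to $\ell=0$ by differentiating in $c$ and inducting, then integrates against $d\mu(y)$ and combines Laplace-transform uniqueness with strict positivity of $I_{\mu}(r)=\int\int e^{-r\|x-y\|^{2}}d\mu\,d\mu$), and its skeleton is workable, but it has loose ends you should close: the signed measure $\sigma=\lambda_{\phi}-\lambda_{\varphi}$ need not be globally well defined when both representing measures have infinite mass near infinity; differentiating $\ell+1$ times under the $d\sigma$ integral needs a domination argument; and, most importantly, Laplace uniqueness gives $f_{\mu}(r,y)=0$ only for $|\sigma|$-a.e.\ $r$ with the null set depending on $y$, so extracting a single $r_{0}$ with $f_{\mu}(r_{0},\cdot)\equiv 0$ requires an extra step (the cleanest fix is the paper's: integrate against $d\mu(y)$ first and use $I_{\mu}(r)>0$ for all $r>0$ by the ISPD property). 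The polynomial converse you give matches the paper's construction. The substantive defect, however, remains the approximation claim in Stage (I).
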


For some functions we can provide a version of Theorem \ref{kobolnewneural} on finite dimensional spaces.

\begin{lem}\label{kobolnew2}
Let $\ell \in \mathbb{N}$ and $\mathcal{H}$ be a Hilbert space. A measure  $ \mu \in \mathfrak{M}(\mathcal{H})$  such that $\|x\|^{2(\ell -1)} \in L^{1}(|\mu|)$ and $\psi(\|x-y\|^{2}) \in L^{1}(|\mu|\times |\mu|)$, satisfies 
$$
 \int_{\mathcal{H}}\psi(\|x-y\|^{2})  d\mu(x)=0 \quad y \in \mathcal{H}
$$
when $\psi: [0, \infty) \to \mathbb{R}$ is one of the following functions:
\begin{enumerate}
 \item[$(i)$] $\psi(t)= t^{a/2}$, \quad  $2 (\ell-1) < a < 2\ell $;
 \item[$(ii)$]  $\psi(t)= t^{\ell-1}\log(t)$, \quad  $\ell >1$;
 \item[$(iii)$] $\ell=1$ and  $\psi \in CM_{\ell}$  is not a polynomial, $\psi(0)\leq 0$.
\item[$(iv)$] $\ell=2$ and   $\psi \in CM_{\ell}$  is not a polynomial, $\psi(0)\leq 0$ but $\|x\|^{2\ell} \in L^{1}(|\mu|)$.
\end{enumerate}
if and only if $\mu$ is the zero measure. 
\end{lem}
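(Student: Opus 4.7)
The forward direction ($\mu = 0 \Rightarrow $ the integral vanishes) is immediate in each item; the content is the converse. The common strategy is to place $\mu$ in the vector space $\mathfrak{M}_{\ell}(\mathcal{H}; \|\cdot\|^{2}, \tilde\psi)$ of Theorem \ref{principal}, where $\tilde\psi = \pm\psi$ is chosen so that $\tilde\psi \in CM_{\ell}$. Once this membership is secured, integrating the pointwise identity
\[
\int_{\mathcal{H}} \psi(\|x-y\|^{2})\, d\mu(x) = 0 \qquad (y\in\mathcal{H})
\]
against $d\mu(y)$ yields $\int\int \tilde\psi(\|x-y\|^{2})\, d\mu(x)d\mu(y) = 0$; since $\tilde\psi$ is not a polynomial of degree at most $\ell$ and $\|\cdot\|^{2}$ is strictly CND on $\mathcal{H}$, Theorem \ref{principal} (or Theorem \ref{ber+cond} when $\ell = 1$) forces $\mu = 0$. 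Because $K_{-\|\cdot\|^{2}}(x,y) = 2\langle x,y\rangle$, the nontrivial conditions for membership in $\mathfrak{M}_{\ell}$ beyond the stated integrability are $\mu(\mathcal{H}) = 0$ and $\int\int \langle x,y\rangle^{j}\, d\mu(x)d\mu(y) = 0$ for $1 \le j \le \ell-1$, both of which follow \emph{a fortiori} from the pointwise moment conditions $\int \langle x,y_{1}\rangle\cdots\langle x,y_{j}\rangle\, d\mu(x) = 0$ for all $y_{i}\in\mathcal{H}$.

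In cases (i) and (ii), where $\psi$ is given explicitly, I would extract these moments by asymptotically expanding $\psi(\|x-te\|^{2})$ in $1/t$ as $t\to\infty$ along a fixed unit vector $e\in\mathcal{H}$. Writing $\|x-te\|^{2} = t^{2}(1+w)$ with $w = -2\langle x,e\rangle/t + \|x\|^{2}/t^{2}$, the binomial series for $(1+w)^{a/2}$ (case (i)), respectively the expansion of $(1+w)^{\ell-1}[\log t^{2} + \log(1+w)]$ (case (ii)), produces an identity of the form
\[
\int_{\mathcal{H}} \psi(\|x-te\|^{2})\, d\mu(x) = \sum_{m=0}^{2(\ell-1)} t^{a-m}\, C_{m}(e) + o(t^{a-2(\ell-1)}),
\]
with an additional $\log t$ multiplicative factor in case (ii) (where the effective exponent is $a = 2(\ell-1)$); the coefficient $C_{m}(e) = \int P_{m}(x,e)\, d\mu(x)$ involves a polynomial $P_{m}$ homogeneous of degree $m$ in $x$. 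Linear independence of the functions $\{t^{a-m}\}_{m}$, resp.\ $\{t^{a-m}, t^{a-m}\log t\}_{m}$, then forces each $C_{m}\equiv 0$; polarizing in $e$ yields the pointwise moment conditions up to order $2(\ell-1)$, more than enough for $\mu\in\mathfrak{M}_{\ell}$. The interchange of limit and integral is justified by splitting the domain at $\|x\|\le t/2$ (where the Taylor remainder is bounded by $C\|x\|^{2(\ell-1)}t^{a-2\ell+1}$) and $\|x\|>t/2$ (where $\|x-te\|^{a}\le C\|x\|^{a}$ and the tail integral $\int_{\|x\|>t/2}\|x\|^{a}\, d|\mu|$ vanishes by dominated convergence); the window $2(\ell-1)<a<2\ell$ together with $\|x\|^{2(\ell-1)}\in L^{1}(|\mu|)$ is precisely what makes both contributions subdominant.

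Cases (iii) and (iv) run the same scheme at lower order via the representation in Equation \ref{compleelltimes2} of a $CM_{\ell}$ function. In (iii), only $\mu(\mathcal{H}) = 0$ is needed: examining the limit (or leading asymptotic power) of $\int \psi(\|x-y\|^{2})\, d\mu(x)$ as $\|y\|\to\infty$ and using that ``$\psi$ is not a polynomial and $\psi(0)\le 0$'' forces the leading coefficient in the asymptotic of $\psi$ to be nonzero, so its multiplier $\mu(\mathcal{H})$ must vanish. In (iv), the reinforced hypothesis $\|x\|^{2\ell}\in L^{1}(|\mu|)$ admits a two-term expansion producing simultaneously $\mu(\mathcal{H}) = 0$ and $v_{\mu} = 0$. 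I expect the main obstacle throughout to be the uniform remainder control in cases (i) and (ii): the estimate must be sharp enough to resolve the full slate of consecutive powers $t^{a-m}$, $m = 0, 1, \ldots, 2(\ell-1)$, and the integrability window specified by the lemma is exactly the tightest under which the near/far-field split carries all of these coefficients.
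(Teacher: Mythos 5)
Your strategy --- force $\mu$ into the space $\mathfrak{M}_{\ell}(\mathcal{H};\|x-y\|^{2},\pm\psi)$ by extracting moment conditions from a ray expansion $y=te$, then conclude by Theorem \ref{principal} --- is genuinely different from the paper's proof (which reduces to finite dimension via Theorem \ref{kobolnewneural}, handles (i)--(ii) by an induction on $\ell$ obtained by applying $\sum_{\iota}\partial^{2}_{e_{\iota}}$ to the hypothesis, and handles (iii)--(iv) with the auxiliary measures $\eta_{t}=t\mu-\tau_{t}$ plus the growth of $\psi$). But as written it has two genuine gaps. First, the step ``polarizing in $e$ yields the pointwise moment conditions'' is not justified: the coefficient of $t^{a-m}$ is $C_{m}(e)=\int\sum_{i}c_{m,i}\,\|x\|^{2i}\langle x,e\rangle^{m-2i}\,d\mu(x)$, a mixture of different trace contractions of the order-$m$ moment tensor, and its vanishing for all $e$ is a single symmetric-tensor identity that does not by itself give $\int\langle x,y_{1}\rangle\cdots\langle x,y_{m}\rangle\,d\mu=0$; yet that is exactly what membership in $\mathfrak{M}_{\ell}$ demands, since $\int\int\langle x,y\rangle^{j}d\mu\,d\mu=0$ is equivalent to the full vanishing of the $j$-th moment tensor. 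Already at $m=2$ you only get $\tfrac{a}{2}\int\|x\|^{2}d\mu+\tfrac{a(a-2)}{2}\int\langle x,e\rangle^{2}d\mu=0$, i.e.\ that the second-moment form is a multiple of the identity; killing it needs an additional trace computation, and for general $m\leq\ell-1$ one needs a harmonic-decomposition/triangular-system argument whose nonsingularity must actually be checked. This is the real work for $\ell\geq 3$ and it is absent. (A smaller bookkeeping point: $t^{\ell-1}\log t\notin C^{\ell-1}[0,\infty)$, so to use the weakened integrability $\|x\|^{2(\ell-1)}\in L^{1}(|\mu|)$ in case (ii) you should apply Theorem \ref{principal} at level $\ell-1$, where $\pm t^{\ell-1}\log t$ in fact lies.)

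Second, in cases (iii)--(iv) the asymptotics you invoke need not carry the information you want. Take $\psi(t)=e^{-t}-1$, a non-polynomial element of $CM_{2}$ with $\psi(0)=0$: then $\int\psi(\|x-te\|^{2})d\mu(x)=-\mu(\mathcal{H})+\int e^{-\|x-te\|^{2}}d\mu(x)$, the second term decays faster than any power of $t$, and no ``two-term expansion'' can produce $v_{\mu}=0$; your route into $\mathfrak{M}_{2}$ stalls. (Such $\psi$ happen to lie in $CM_{1}$, so a patch is possible by splitting on the asymptotic growth of $\psi$ and falling back on Theorem \ref{ber+cond} when $\psi$ is bounded or asymptotically affine, but that case analysis is not in your write-up, and the claim that ``$\psi(0)\leq0$ forces the leading coefficient to be nonzero'' is exactly the point that needs proof.) The paper avoids this entirely: the measures $\eta_{t}:=t\mu-\tau_{t}$, with $\tau_{t}=t\mu(\mathcal{H})\delta_{0}-(\delta_{tv_{\mu}}-\delta_{-tv_{\mu}})/2$, satisfy the moment constraints by construction, the hypothesis collapses the double integral to $\psi(0)(4t^{2}\mu(\mathcal{H})^{2}+2)-2\psi(4t^{2}\|v_{\mu}\|^{2})\geq0$, and the behaviour of $\psi$ at infinity then yields $v_{\mu}=0$ and $\psi(0)=0$ before strictness in Theorem \ref{principal} finishes. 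On the positive side, your remainder control in (i)--(ii) within the window $2(\ell-1)<a<2\ell$ looks salvageable, and the ray expansion has the attraction of not requiring the reduction to finite dimension that the paper's Laplacian induction needs; but until the two gaps above are closed the proposal is not a proof.
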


We remark that on the case $(iv)$ we may  withdraw the additional assumption $\|x\|^{2\ell} \in L^{1}(|\mu|)$ if $\psi \in C^{\ell-1}[0,\infty)$.

\section{Proofs}\label{Proofs}

\subsection{\textbf{Section \ref{Conditionally positive definite kernels}}}


	\begin{proof}[\textbf{Proof of Theorem \ref{gensupport}}]The converse is immediate.\\
	Suppose that $\gamma$ is $P$-CPD. Since $P$ is finite dimensional there exists a basis  $p_{1}, \ldots , p_{m} \in P$ for it such that $p_{i}(\xi_{j})= \delta_{i,j}$.   By the integrability assumptions on the functions $p_{i}$ and $\gamma(x, \xi_{j})$, the kernel $K_{\gamma} \in L^{1}(|\mu|\times |\mu|)$, and 
	$$
	\int_{X}\int_{X}\gamma(x,y)d\mu(x)d\overline{\mu}(y)= \int_{X}\int_{X}K_{\gamma}(x,y)d\mu(x)d\overline{\mu}(y)
	$$
	the conclusion will follow from Lemma \ref{extmmddominio}.   
\end{proof}

\begin{proof}[\textbf{Proof of Lemma \ref{unidimcase}} ]
Let $\mu \in 	\mathfrak{M}_{P}(X)$  for which $\gamma(x,y) \in L^{1}(|\mu|\times |\mu|)$. Let \\ $A:=\{\xi \in X, \quad \gamma( \cdot , \xi)  \in L^{1}(|\mu|)\}$, which by   Fubini-Tonelli its complement has $|\mu|$ zero measure. If $A \cap\{ \xi, \quad p(\xi) \neq0\} \neq \emptyset$, the result is a consequence of Theorem \ref{gensupport}. On the other hand, if  $A \cap\{ \xi, \quad p(\xi) \neq0\} = \emptyset$, note that the kernel $\gamma$ is positive definite when restricted to the closed set  $B:=\{ \xi, \quad p(\xi) = 0\}$, $A \subset B$,  and 
$$
\int_{X}\int_{X}\gamma(x,y)d\mu(x)d\overline{\mu}(y)= \int_{B}\int_{B}\gamma(x,y)d\mu(x)d\overline{\mu}(y).
$$
The conclusion follows from Lemma \ref{extmmddominio}.\\
Now, under the additional requirements on $p$ and $\gamma$ it is easy to see that $\gamma$ is $[p]$-PD if and only if the kernel
$$
(x,y) \in X \times X \to \beta(x,y):=\frac{\gamma(x,y)}{p(x)p(y)} \in \mathbb{R} 
$$
is CPD. Note that is sufficient to prove the $3$ equivalences on the kernel $\beta$ for any measure $\eta \in \mathfrak{M}(X)$ with $\eta(X)=0$, because we can take $d\eta=pd\mu$.\\
The kernel $d(x,y):=(-2\beta(x,y) + \beta(x,x) + \beta(y,y))^{1/2}$ is a pseudometric on $X$, because $d^{2}$ is a CND kernel with $d(x,x)=0$ for all $x \in X$, so it   satisfies the triangle inequality. Since the function $\beta(x,x)$ is bounded, the relations
$$
\beta \in L^{1}(|\eta|\times |\eta|), \quad \beta(x,z) \in  L^{1}(|\eta|) \text{ for some } z \in X, \quad  \beta(x,z) \in  L^{1}(|\eta|) \text{ for every } z \in X
$$
are respectively equivalent to the relations 
$$
d \in L^{2}(|\eta|\times |\eta|), \quad d(x,z) \in  L^{2}(|\eta|) \text{ for some } z \in X, \quad  d(x,z) \in  L^{2}(|\eta|) \text{ for every } z \in X.
$$
The conclusion that these $3$ properties are equivalent for the kernel $d$ follows directly from the triangle inequality.
\end{proof}

\begin{proof}[\textbf{Proof of Lemma \ref{extmmddominio}}] Assume without loss of generalization that $\mu$ is a nonnegative measure. The fact that the set $X_{\mu}$ satisfies $\mu(X - X_{\mu})=0$ is a direct consequence of the Fubini-Tonelli Theorem.\\
Also, by the  Radon hypothesis, there exists a sequence of nested compact sets $(\mathcal{C}_{n})_{n \in \mathbb{N}}$  for which  $\mu(X - \cup_{n \in \mathbb{N}} \mathcal{C}_{n})=0$. In particular, by the Dominated Convergence Theorem, the $L^{1}(\mu \times \mu)$ convergence holds 
$$
\int_{X}\int_{X}K(x,y)\chi_{\mathcal{C}_{n}}(x)\chi_{\mathcal{C}_{n}}(y)d\mu(x)d\mu(y) \to \int_{X}\int_{X}K(x,y)d\mu(x)d\mu(y),
$$
because $\mu\times \mu(X \times [X-\cup_{n \in \mathbb{N}} \mathcal{C}_{n}])=0$. The function $\sqrt{K(x,x)} \in L^{1}(\chi_{\mathcal{C}_{n}}\mu)$, so by Lemma \ref{initialextmmddominio}, $K_{\mu^{n}} \in \mathcal{H}_{K}$, where $\mu^{n}:=\chi_{\mathcal{C}_{n}}d\mu$, and
\begin{align*}  
\langle K_{\mu^{n}} -K_{\mu^{m}},& K_{\mu^{n}} - K_{\mu^{m}} \rangle_{\mathcal{H}_{K}}\\
&= \int_{X}\int_{X}K(x,y)[\chi_{\mathcal{C}_{n}}(x) -\chi_{\mathcal{C}_{m}}(x)][\chi_{\mathcal{C}_{n}}(y) - \chi_{\mathcal{C}_{m}}(y)]d\mu(x)d\mu(y) \xrightarrow[m,n \to \infty]{}  0 
\end{align*}  
which proves that the sequence $(K_{\mu^{n}})_{n \in \mathbb{N}}$ is Cauchy, in particular, convergent to an element $K_{\mu} \in \mathcal{H}_{K}$. Since  $\mathcal{H}_{K}$ is a RKHS, convergence  in norm implies pointwise convergence, so 
$$
K_{\mu}(z) = \lim_{n \to \infty} K_{\mu}^{n}(z)=  \lim_{n \to \infty}\int_{\mathcal{C}_{n}}K(x,z)d\mu(x)= \int_{X}K(x,z)d\mu(x),
$$
for every $z \in X_{\mu}$, which proves our claim. \\
Now, if $K \in L^{1}(\mu\times \eta)$, we have that
$$
\langle K_{\eta^{n}}, K_{\mu^{n}}\rangle_{\mathcal{H}_{K}}= \int_{X} \int_{X}k(x,y)\chi_{\mathcal{D}_{n}}(x)\chi_{\mathcal{C}_{n}}(y)d\eta(x)d\mu(y).
$$ 
The left hand side of this equality converge to $\langle K_{\eta}, K_{\mu}\rangle_{\mathcal{H}_{K}}$, while the right hand side converge to $\int_{X} \int_{X}k(x,y)d\eta(x)d\mu(y)$ by the Dominated Convergence Theorem.
\end{proof}	

\subsection{\textbf{Section \ref{Inner products defined by CND kernels and derivatives of completely monotone functions} } }
Throughout the rest of the paper, we use the well known fact that a Hermitian kernel $\gamma:X \times X \to \mathbb{C}$ is CND if and only if the kernel $e^{-r\gamma(x,y)}$ is positive definite for every $r>0$, page $74$ in \cite{berg0}.

Next Lemma is an improvement of Lemma \ref{unidimcase} for $p$ as the set of  constant functions. We use CND instead of CPD because it is how we apply this result.

\begin{lem}\label{estimativa} Let   $\gamma: X \times X \to \mathbb{R}$ be a continuous CND kernel such that $\gamma(x,x)$ is a bounded function, $\mu \in \mathfrak{M}(X)$ and $\theta >0$. Then, the following assertions are equivalent
\begin{enumerate}
    \item[$(i)$] $\gamma \in L^{\theta}(|\mu|\times |\mu|)$;
    \item[$(ii)$] The function $x \in X \to \gamma(x,z) \in L^{\theta}(|\mu|)$ for some $z \in X$;
    \item[$(iii)$] The function $x \in X \to \gamma(x,z) \in L^{\theta}(|\mu|)$ for every $z \in X$.
    \end{enumerate}
\end{lem}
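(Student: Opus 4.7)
The plan is to introduce an auxiliary pseudometric on $X$ derived from $\gamma$, reduce the three integrability statements for $\gamma$ to analogous statements for that pseudometric, and then conclude by the triangle inequality together with Fubini--Tonelli.

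First, since $\gamma$ is a continuous real CND kernel, choosing $c_{1}=1$, $c_{2}=-1$ in the defining inequality yields $\gamma(x,x)+\gamma(y,y)\le 2\gamma(x,y)$ for all $x,y\in X$. Hence
$$d(x,y):=\sqrt{2\gamma(x,y)-\gamma(x,x)-\gamma(y,y)}$$
is a well-defined nonnegative continuous function. A direct verification shows that $d^{2}$ is itself a continuous CND kernel vanishing on the diagonal, so by the standard Schoenberg argument $d$ is a continuous pseudometric on $X$. Let $M\ge \sup_{x}|\gamma(x,x)|$. From the identity $\gamma(x,y)=\tfrac{1}{2}d(x,y)^{2}+\tfrac{1}{2}(\gamma(x,x)+\gamma(y,y))$ one obtains
$$\tfrac{1}{2}d(x,y)^{2}-M\;\le\;\gamma(x,y)\;\le\;\tfrac{1}{2}d(x,y)^{2}+M,$$
hence there exist constants $C_{\theta},C'_{\theta}>0$ such that $|\gamma|^{\theta}\le C_{\theta}(d^{2\theta}+1)$ and $d^{2\theta}\le C'_{\theta}(|\gamma|^{\theta}+1)$ pointwise. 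Since $|\mu|$ is a finite measure, additive constants do not affect integrability, so each of the conditions (i), (ii), (iii) for $\gamma$ at exponent $\theta$ translates into the corresponding condition for $d$ at exponent $2\theta$.

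It then suffices to prove the three conditions are equivalent for the pseudometric $d$ and exponent $2\theta$. For (ii) $\Rightarrow$ (iii), if $d(\cdot,z_{0})\in L^{2\theta}(|\mu|)$ then for any $z\in X$ the inequality $d(x,z)\le d(x,z_{0})+d(z_{0},z)$, combined with the elementary bound $(a+b)^{2\theta}\le c_{\theta}(a^{2\theta}+b^{2\theta})$ and the finiteness of $|\mu|(X)$ and $d(z_{0},z)$, yields $d(\cdot,z)\in L^{2\theta}(|\mu|)$. For (iii) $\Rightarrow$ (i), fix any $z$ and apply $d(x,y)^{2\theta}\le c_{\theta}(d(x,z)^{2\theta}+d(z,y)^{2\theta})$; integrating against $|\mu|\times|\mu|$ produces a finite bound. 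Finally (i) $\Rightarrow$ (ii) is immediate from Fubini--Tonelli: if $d^{2\theta}\in L^{1}(|\mu|\times|\mu|)$ then the slice $d(\cdot,z)$ lies in $L^{2\theta}(|\mu|)$ for $|\mu|$-a.e.\ $z$, in particular for some $z$.

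The content is essentially bookkeeping; the only genuine input is the CND inequality that manufactures the pseudometric $d$ and the boundedness of $x\mapsto\gamma(x,x)$, which allows $\gamma$ and $\tfrac{1}{2}d^{2}$ to be compared up to an $L^{\infty}$ error so the $L^{\theta}$-integrability conditions for $\gamma$ coincide with the $L^{2\theta}$-integrability conditions for $d$. No step should be subtle; the mild obstacle is only tracking the exponent $\theta$ on both sides of $0$ (so the constants $C_{\theta}$ and $c_{\theta}$ have to be chosen accordingly), but this is handled by the standard two-case form of the inequality $(a+b)^{2\theta}\le c_{\theta}(a^{2\theta}+b^{2\theta})$.
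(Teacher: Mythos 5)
Your proof is correct and follows essentially the same route as the paper: both extract the diagonal-vanishing CND part of $\gamma$, whose square root is a pseudometric, use boundedness of $\gamma(x,x)$ and finiteness of $|\mu|$ to transfer the $L^{\theta}$ conditions for $\gamma$ to $L^{2\theta}$ conditions for that pseudometric, and then conclude with Fubini--Tonelli for $(i)\Rightarrow(ii)$ and the triangle inequality for the rest. The only cosmetic difference is that the paper splits the exponent handling into Minkowski ($\theta\geq 1/2$) and subadditivity of $t\mapsto t^{p}$, $p\leq 1$, whereas you use the single pointwise bound $(a+b)^{2\theta}\leq c_{\theta}(a^{2\theta}+b^{2\theta})$.
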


\begin{proof}Since $\gamma$ is CND there exists a CND kernel $\beta: X \times X \to \mathbb{R}$, for which $\beta(x,x)=0$ for every $x \in X$, $\beta^{1/2}$ is a pseudometric on $X$ and $\gamma(x,y)= \beta(x,y) + \gamma(x,x)/2 + \gamma(y,y)/2$.\\
Since $\gamma(x,x)$ is bounded and $\mu$ is a finite measure, for $\theta \geq 1$ the three equivalences for $\gamma$ are respectively equivalent to the three equivalences for the CND kernel $\beta$ by the Minkowsky inequality. If $\theta \in (0, 1)$ the same relation occurs, but it follows from the general relation on $L^{\theta}$ spaces 
$$
\int |f+g|^{\theta}  \leq \int |f|^{\theta} + \int |g|^{\theta}. 
$$
In particular, we may suppose that $\gamma$ is a CND kernel for which $\gamma(x,x)=0$ for every $x \in X$.\\
If $\gamma(x,y)^{\theta} \in L^{1}(|\mu|\times |\mu|)$ then there exists  $z\in X$ for which  $\gamma(x,z)^{\theta}  \in L^{1}(|\mu|)$ by the Fubini-Tonelli Theorem.\\
If $x \in X \to \gamma(x,z)^{\theta}  \in L^{1}(|\mu|)$ for some $z \in X$, then for every $y \in X$ 
$$
(\gamma(x,y))^{\theta}=((\gamma(x,y))^{1/2})^{2\theta} \leq  (\gamma(x,z)^{1/2} + \gamma(y,z)^{1/2})^{2\theta}.  
$$
For $\theta \geq 1/2$, the functions inside the parenthesis on the right hand side of the previous equation are elements of $L^{2\theta}(|\mu|)$ ($x$ variable), which by Minkowski Theorem we obtain the integrability of $ x \in X \to \gamma(x,y)^{\theta}$. Integrating the Minkowsky inequality with respect to $d|\mu|(y)$, we also obtain that $\gamma^{\theta} \in L^{1}(|\mu|\times |\mu|)$.
For $0< \theta < 1/2 $, the proof is the same but it follows from the from the general relation on $L^{2\theta}$ spaces as mentioned above.
\end{proof}

\begin{proof}[\textbf{Proof of Theorem \ref{ber+cond}}]
For the first claim it is sufficient to prove that $I(\mu, \mu)_{\gamma, \psi} \geq 0$ by the linearity of the integration involved. Indeed, by Equation \ref{compleelltimes2}, we have that
$$
-\psi(\gamma(x,y)) = a + b\gamma(x,y) + \int_{(0, \infty)}\frac{e^{-r\gamma(x,y)}-1}{r}d\lambda(r),
$$
where $ b \leq 0$ and $\lambda$ is a nonnegative Radon measure such that $\min \{1,r^{-1}\} \in L^{1}(\lambda)$. Consequently,  $-\psi(\gamma(x,y))$ is a CPD kernel, the conclusion is then  consequence of Corollary \ref{unidimcase}.\\
If $\psi$ is not a linear function, then $\lambda((0, \infty))>0$, because the representation on Equation \ref{compleelltimes2} is unique. \\
If $\mu \in \mathfrak{M}_{1}(X; \gamma, \psi)$, then  the $3$ functions that describes $\psi(\gamma(x,y))$ are in $L^{1}(|\mu|\times |\mu|)$, because $e^{-r\gamma(x,y)}-1 \leq 0$ for every $r >0$ and $x,y \in X$ and $b \leq 0$. Since
$$
(1-e^{-rt})\leq r(1+t)\min \{1, r^{-1}\}, \quad r,t \geq 0
$$
we can apply  Fubini-Tonelli and obtain that
\begin{align*}
-\int_{X}\int_{X}\psi(\gamma(x,y))d\mu(x)d\mu(y)=&  b\int_{X}\int_{X}\gamma(x,y)d\mu(x)d\mu(y)  \\
&+ \int_{(0, \infty)}\left [\int_{X}\int_{X}e^{-r\gamma(x,y)}d\mu(x)d\mu(y)\right ] \frac{1}{r}d\lambda(r).
\end{align*}
 The first double integral is non positive by Corollary \ref{unidimcase}. Since $2\gamma(x,y)= \gamma(x,x) + \gamma(y,y)$ only when $x=y$, the kernel $e^{-r\gamma(x,y)}$ is ISPD for every $r>0$ by Theorem $4.2$  in \cite{gaussinfi}, so 
$$
\int_{X}\int_{X}e^{-r\gamma(x,y)}d\mu(x)d\mu(y) >0, \quad r>0
$$
and the conclusion follows because $\lambda((0, \infty))>0$.

\end{proof}

\begin{proof}[\textbf{Proof of Theorem \ref{hyperinnprod}}]By equation $4.38.2$ in \cite{NIST:DLMF}, we have that for $t \geq 1$
$$
\arccosh (t)= \log (2) + \log(t) -\sum_{k=1}^{\infty}\frac{(2k)!}{2^{2k}(k!)^{2}}\frac{t^{-2k}}{2k}.
$$
In \cite{berg0} it is proved that $\log([x,y])$ is a CND kernel on $\mathcal{H}$ while by \cite{gaussinfi} the positive definite kernel  $[x,y]^{-2k}$ on $\mathbb{H}$ is ISPD for every $k \in \mathbb{N}$. Since the series appearing on the $\arccosh$ formula above only contains nonnegative numbers, we may reverse the order the summation with integration for any $\eta \in \mathfrak{M}_{1}(\mathbb{H}; t)$. Consequently, if $\mu$ is not the zero measure
\begin{align*}
-\int_{\mathbb{H}} \int_{\mathbb{H}}  d_{\mathbb{H}}(x,y) d\mu(x)d\mu(y)&=-\int_{\mathbb{H}} \int_{\mathbb{H}}  \arccosh ([x,y]) d\mu(x)d\mu(y)\\
&=  \int_{\mathbb{H}} \int_{\mathbb{H}} -\log([x,y])+ \sum_{k=1}^{\infty}\frac{(2k)!}{2^{2k}(k!)^{2}2k}[x,y]^{-2k}d\mu(x)d\mu (y)\\
&\geq  \int_{\mathbb{H}} \int_{\mathbb{H}}\sum_{k=1}^{\infty}\frac{(2k)!}{2^{2k}(k!)^{2}2k}[x,y]^{-2k}d\mu(x)d\mu (y)\\
&= \sum_{k=1}^{\infty}\frac{(2k)!}{2^{2k}(k!)^{2}2k} \int_{\mathbb{H}} \int_{\mathbb{H}}[x,y]^{-2k}d\mu(x)d\mu (y) >0.
\end{align*}   
\end{proof}

\begin{proof}[\textbf{Proof of Corollary \ref{ber+condcor}}] By Remark $3.3$-(iv)  on  \cite{bers}, if $\psi$ satisfy these assumptions then we can write the kernel $D_{\psi, \gamma}$ as
$$
D_{\psi, \gamma}(x,y)= \psi(\gamma(x,y))= \int_{(0, \infty)}\frac{1-e^{-r\gamma(x,y)}}{r}d\lambda(r)
$$
where $\lambda$ is a nonnegative Radon measure such that $\min \{1,r^{-1}\} \in L^{1}(\lambda)$. Because $\gamma$ is a metric, we have that 
$$
1-e^{-r\gamma(x,y)} \leq [1-e^{-r\gamma(x,z)}] + [1-e^{-r\gamma(z,y)}], \quad x,y,z \in X, 
$$
Which proves that $D_{\psi, \gamma}(x,y) \leq D_{\psi, \gamma}(x,z) + D_{\psi, \gamma}(z,y)$. \\
The topologies are equivalent because $\psi$ is necessarily an increasing function with $\psi(0)=0$, so $\psi(t_{n}) \to 0$ if and only if $t_{n} \to 0$.\\
The metric space $(X, D_{\psi, \gamma})$ has strong negative type because the kernel $\gamma$ is continuous on the metric topology  $(X,\gamma)$, $\psi$ is not a linear function and the remaining requirements for Theorem \ref{ber+cond} are satisfied. 
\end{proof}

In order to prove the next result, we will use the same  infinite  dimensional multinomial theorem that was used to prove that the Gaussian kernel is ISPD on Hilbert spaces on \cite{gaussinfi}. If $\mathcal{H}$ is a real Hilbert space and $(e_{\xi})_{\xi \in \mathbb{I}}$ is a complete orthonormal basis for it, then for every $n \in \mathbb{N}$
\begin{equation}\label{multinomial}
\langle x,y\rangle^{n}= \left (\sum_{\xi \in \mathbb{I}} x_{\xi}y_{\xi} \right )^{n} = \sum_{\alpha \in (\mathbb{I},\mathbb{Z}_{+}), |\alpha| =n} \frac{n!}{\alpha!} x^{\alpha}y^{\alpha}
\end{equation}
where $x_{\xi}= \langle x, e_{\xi} \rangle $, $(\mathbb{I},\mathbb{Z}_{+})$ is the space of functions from $\mathbb{I}$ to $\mathbb{Z}_{+}$, the condition $|\alpha|=n$ means that $\sum_{\xi \in \mathbb{I}}\alpha(\xi)=n$ (in particular $\alpha$ must be  the zero function except for a finite number of points). Also $\alpha! = \prod_{\xi \in \mathbb{I}}\alpha(\xi)!$ (which makes sense because $0!=1$) and $x^{\alpha}= \prod_{ \alpha(\xi) \neq 0}x_{\xi}^{\alpha(\xi)}$. This result can be proved using approximations of $\langle x,y\rangle$ on finite dimensional spaces and the multinomial theorem on those spaces.  The number $ \lfloor l \rfloor$ stands for the smallest integer less then or equal to $l$.

On the next Lemma we use the fact that  for a continuous positive definite kernel $K: X \times X \to \mathbb{C}$ a  measure  $ \mu \in \mathfrak{M}_{\sqrt{K}}(X )$ satisfy
$$
\int_{X}K(x,y)d\mu (x)=0, \quad y \in X
$$
if and only if $\int_{X}\int_{X}K(x,y)d\mu (x)d\overline{\mu }(y)=0$, which can be seen on   \cite{micchelli2006universal}, \cite{Sriperumbudur3}.

\begin{lem}\label{hilbertcondk} Let $\mathcal{H}$ be a real Hilbert space, $n \in \mathbb{N}$ and $\mu \in \mathfrak{M}(\mathcal{H})$. Suppose that  $\|x-y\|^{2n} \in L^{1}(|\mu|\times |\mu|)$, then 
$$
\langle x,y\rangle^{k}\|x\|^{2i}\|y\|^{2j}  \in  L^{1}(|\mu|\times |\mu|), \quad k, i, j \in \mathbb{Z}_{+}, \quad  k+i+j\leq n. 
$$
Moreover, if $\int_{\mathcal{H}}\int_{\mathcal{H}} \langle x,y \rangle^{k}d\mu(x)d\mu(y)=0$ for every $0\leq k \leq n-1$, then
\begin{align*}
(-1)^{n}\int_{\mathcal{H}}\int_{\mathcal{H}}&\|x-y\|^{2n}d\mu(x)d\mu(y)\\
&= \sum_{l=0}^{ \lfloor n/2 \rfloor} \binom{n}{2l}\binom{2l}{l}2^{n-2l}\int_{\mathcal{H}}\int_{\mathcal{H}} \langle x,y\rangle^{n-2l}\|x\|^{2l}\|y\|^{2l}d\mu(x)d\mu(y) \geq 0,
\end{align*}
and
$$
\int_{\mathcal{H}}\int_{\mathcal{H}}\|x-y\|^{2m}d\mu(x)d\mu(y)=0, \quad 0 \leq m \leq n-1.
$$
\end{lem}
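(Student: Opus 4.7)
The plan is to proceed in four steps: (1) establish integrability of all mixed powers that appear, (2) expand $\|x-y\|^{2n}$ via the finite multinomial theorem, (3) use Equation \ref{multinomial} to pass to integrals of monomials $x^{\alpha}$ and exploit the vanishing-moment hypothesis, and (4) read off the formula, positivity, and the final vanishing claim.

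For integrability, Fubini--Tonelli applied to $\|x-y\|^{2n}\in L^{1}(|\mu|\times |\mu|)$ yields some $y_{0}\in \mathcal{H}$ with $x\mapsto \|x-y_{0}\|^{2n}\in L^{1}(|\mu|)$. The convexity bound $\|x\|^{2n}\leq 2^{2n-1}(\|x-y_{0}\|^{2n}+\|y_{0}\|^{2n})$ and finiteness of $|\mu|$ give $\|x\|^{p}\in L^{1}(|\mu|)$ for every $0\leq p\leq 2n$. Cauchy--Schwarz yields $|\langle x,y\rangle^{k}\|x\|^{2i}\|y\|^{2j}|\leq \|x\|^{k+2i}\|y\|^{k+2j}$, and the exponents $k+2i, k+2j\leq 2n$ force integrability through a product of $L^{1}$-marginals.

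Next, expand $\|x-y\|^{2n}=(\|x\|^{2}+\|y\|^{2}-2\langle x,y\rangle)^{n}$ by the finite multinomial theorem into $\sum_{a+b+c=n}\binom{n}{a,b,c}(-2)^{c}\|x\|^{2a}\|y\|^{2b}\langle x,y\rangle^{c}$, a finite sum of integrable summands. Fix a complete orthonormal basis $(e_{\xi})_{\xi\in \mathbb{I}}$ and apply Equation \ref{multinomial} to each factor: $\|x\|^{2a}=\sum_{|\beta|=a}\tfrac{a!}{\beta!}x^{2\beta}$, $\|y\|^{2b}=\sum_{|\beta'|=b}\tfrac{b!}{\beta'!}y^{2\beta'}$, and $\langle x,y\rangle^{c}=\sum_{|\gamma|=c}\tfrac{c!}{\gamma!}x^{\gamma}y^{\gamma}$. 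The termwise-absolute majorant telescopes to
$$
\|x\|^{2a}\|y\|^{2b}\left(\sum_{\xi}|x_{\xi}||y_{\xi}|\right)^{c}\leq \|x\|^{2a+c}\|y\|^{2b+c},
$$
which lies in $L^{1}(|\mu|\times |\mu|)$, so Tonelli legalizes interchanging the infinite sum with the double integral, yielding
$$
\int_{\mathcal{H}}\int_{\mathcal{H}}\|x\|^{2a}\|y\|^{2b}\langle x,y\rangle^{c}d\mu(x)d\mu(y)=\sum_{\beta,\beta',\gamma}\frac{a!b!c!}{\beta!\beta'!\gamma!}\left(\int_{\mathcal{H}}x^{2\beta+\gamma}d\mu\right)\left(\int_{\mathcal{H}}y^{2\beta'+\gamma}d\mu\right).
$$
The hypothesis $\int_{\mathcal{H}}\int_{\mathcal{H}}\langle x,y\rangle^{k}d\mu d\mu=0$, reexpanded by Equation \ref{multinomial}, becomes $\sum_{|\alpha|=k}\tfrac{k!}{\alpha!}(\int x^{\alpha}d\mu)^{2}=0$, a sum of squares of real numbers, so $\int_{\mathcal{H}}x^{\alpha}d\mu=0$ for every multi-index $\alpha$ with $|\alpha|\leq n-1$. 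A term above is therefore nonzero only when $|2\beta+\gamma|=2a+c\geq n$ and $|2\beta'+\gamma|=2b+c\geq n$; summed and combined with $a+b+c=n$ both inequalities must be equalities, forcing $a=b$ and $c=n-2a$ with $0\leq a\leq \lfloor n/2\rfloor$.

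Substituting this diagonal reduction back into the finite expansion of $\|x-y\|^{2n}$, using $(-2)^{n-2a}=(-1)^{n}2^{n-2a}$ and the identity $\binom{n}{a,a,n-2a}=\binom{n}{2a}\binom{2a}{a}$, multiplication by $(-1)^{n}$ produces the stated formula. Each surviving integrand $\langle x,y\rangle^{n-2l}\|x\|^{2l}\|y\|^{2l}$ is the Schur (Hadamard) product of the continuous positive definite kernels $\langle x,y\rangle^{n-2l}$ and $\|x\|^{2l}\|y\|^{2l}=f(x)f(y)$, hence is itself a continuous positive definite kernel; Lemma \ref{extmmddominio} then guarantees its double integral against $\mu$ is nonnegative. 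For the final vanishing claim, replacing $n$ by $m\leq n-1$ in the same expansion would again require $2a+c\geq n$ and $2b+c\geq n$, whence $a+b+c\geq n>m$, contradicting $a+b+c=m$; every term vanishes, so $\int_{\mathcal{H}}\int_{\mathcal{H}}\|x-y\|^{2m}d\mu(x)d\mu(y)=0$. The main obstacle is the combinatorial bookkeeping that isolates the diagonal $a=b$ and recognizes the multinomial coefficient as $\binom{n}{2l}\binom{2l}{l}$; justifying the Tonelli interchange over an infinite-dimensional basis is a subsidiary technical point, handled uniformly by the majorant displayed above.
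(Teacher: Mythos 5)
Your proof is correct, and while its skeleton coincides with the paper's (expand $\|x-y\|^{2n}$ by the multinomial theorem, use the infinite-dimensional identity of Equation \ref{multinomial}, isolate the diagonal terms $a=b=l$, $c=n-2l$, and get nonnegativity from positive definiteness of $\langle x,y\rangle^{n-2l}\|x\|^{2l}\|y\|^{2l}$), the way you kill the off-diagonal terms is genuinely different and somewhat more elementary. The paper works at the level of double integrals: it expands $\langle x,y\rangle^{k+2i}$ into the positive definite pieces $\langle x,y\rangle^{k}x^{\alpha}y^{\alpha}$, concludes each piece has zero energy, then invokes the fact (cited from the MMD literature) that zero energy of a continuous positive definite kernel forces $\int_{\mathcal{H}}\langle x,y\rangle^{k}x^{\alpha}d\mu(x)=0$ for every $y$, via a sequence argument choosing $y_{l}\to y$ with $y_{l}^{\alpha}\neq 0$, and finally re-expands the mixed term. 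You instead convert the hypothesis directly into the vanishing of all monomial moments $\int_{\mathcal{H}}x^{\alpha}d\mu=0$ for $|\alpha|\leq n-1$ by writing $\int\int\langle x,y\rangle^{k}d\mu\,d\mu=\sum_{|\alpha|=k}\tfrac{k!}{\alpha!}\bigl(\int x^{\alpha}d\mu\bigr)^{2}$ (a sum of squares, legitimate since the section works with real measures), and then factor every term of the expansion into products of such moments; this bypasses both the embedding fact and the density trick, at the price of the bookkeeping with triples $(\beta,\beta',\gamma)$, which you justify with the correct majorant $\|x\|^{2a+c}\|y\|^{2b+c}\in L^{1}(|\mu|\times|\mu|)$. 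Both routes use the same integrability step (your direct convexity bound replaces the paper's appeal to Lemma \ref{estimativa}) and the same positivity mechanism (your citation of Lemma \ref{extmmddominio} in place of Lemma \ref{initialextmmddominio} is equally valid, since $K\in L^{1}(|\mu|\times|\mu|)$ here); as a side remark, your factorized expansion would even give positivity of each diagonal term for free, as $\sum_{\gamma}\tfrac{(n-2l)!}{\gamma!}\bigl(\sum_{\beta}\tfrac{l!}{\beta!}\int x^{2\beta+\gamma}d\mu\bigr)^{2}$, so the kernel argument could be dropped altogether.
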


\begin{proof}By Lemma \ref{estimativa}, the fact that  $\|x-y\|^{2n} \in L^{1}(|\mu|\times |\mu|)$ is equivalent at  $\|x\|^{2n} \in L^{1}(|\mu|)$. Since $|\langle x,y\rangle^{k}\|x\|^{2i}\|y\|^{2j}| \leq \|x\|^{2i+k}\|y\|^{2j+k}$
 and $\|x\|^{2i+k}\leq \max\{1,\|x\|^{2n} \}$, we obtain the desired integrability.\\
 Note that
 $$
 \|x-y\|^{2m} = (\|x\|^{2} + \|y\|^{2} -2\langle x,y\rangle  )^{m}= \sum_{k=0}^{m} \sum_{i=0}^{m-k}\binom{m}{k}\binom{m-k}{i} (-2)^{k}\langle x,y\rangle^{k}\|x\|^{2i}\|y\|^{2(m-k-i)}
 $$
If $k + 2i \leq n-1$, then by the hypothesis
\begin{align*}
0&=\int_{\mathcal{H}}\int_{\mathcal{H}}\langle x,y\rangle^{k +2i}d\mu(x)d\mu(y)= \int_{\mathcal{H}}\int_{\mathcal{H}}\langle x,y\rangle^{k} \left ( \sum_{\xi \in \mathbb{I}}x_{\xi}y_{\xi}  \right )^{2i}  d\mu(x)d\mu(y)\\
&= \int_{\mathcal{H}}\int_{\mathcal{H}}\langle x,y\rangle^{k} \left ( \sum_{\xi \in \mathbb{I}}x_{\xi}y_{\xi}  \right )^{2i}  d\mu(x)d\mu(y)\\
&=\int_{\mathcal{H}}\int_{\mathcal{H}}\langle x,y\rangle^{k} \left ( \sum_{ |\alpha| =2i} \frac{2i!}{\alpha!} x^{\alpha}y^{\alpha} \right )  d\mu(x)d\mu(y)\\
&=\sum_{ |\alpha| =2i} \frac{2i!}{\alpha!}     \int_{\mathcal{H}}\int_{\mathcal{H}}\langle x,y\rangle^{k} x^{\alpha}y^{\alpha}  d\mu(x)d\mu(y).
\end{align*}
But then, $\int_{\mathcal{H}}\int_{\mathcal{H}}\langle x,y\rangle^{k} x^{\alpha}y^{\alpha}  d\mu(x)d\mu(y) =0$ for every  $\alpha \in (\mathbb{I}, \mathbb{Z}_{+})$ with  $|\alpha|=2i$,  because the kernel inside the double integral is positive definite,  continuous and satisfies the conditions on Lemma \ref{initialextmmddominio}. In particular,  since for every $y \in \mathcal{H}$ and $|\alpha|=2i$ there exists a sequence $(y_{l})_{l \in \mathbb{N}}$ that converges to $y$ and  $y_{l}^{\alpha}\neq 0$, we have that
$$
\int_{\mathcal{H}}\langle x,y\rangle^{k} x^{\alpha}  d\mu(x)=0, \quad y \in \mathcal{H}, \alpha \in (\mathbb{I}, \mathbb{Z}_{+}), |\alpha|=2i.
$$
Then
\begin{align*}
&\int_{\mathcal{H}}\int_{\mathcal{H}}\langle x,y\rangle^{k} \|x\|^{2i} \|y\|^{2(m-k-i)}d\mu(x)d\mu(y)\\
&=\sum_{ |\beta| =m-k-i} \sum_{ |\alpha| =i} \frac{(m-k-i)!}{\beta!}\frac{i!}{\alpha!}\int_{\mathcal{H}}\int_{\mathcal{H}}\langle x,y\rangle^{k} x^{2\alpha}y^{2\beta}  d\mu(x)d\mu(y)=0.
\end{align*}
By symmetry, the same double integral is zero when $k+2(m-k-i) \leq n-1$. Those two relations occur only when $n=m$ and   $2i = 2(n-i-k)$. The remaining terms on the sum  when $n=m$ are exactly those on the statement on the theorem after a simplification using those two equalities.  The conclusion follows because  the kernel $ \langle x,y\rangle^{k}\|x\|^{2l}\|y\|^{2l}$ is continuous, positive definite and satisfies the conditions on Lemma \ref{initialextmmddominio} 
\end{proof}

\begin{cor}\label{hilbertcondkcor}Let  $\gamma: X \times X \to [0, \infty)$ be a continuous CND kernel such that $x \to \gamma(x,x)$ is a constant function and $2\gamma(x,y)=\gamma(x,x) + \gamma(y,y)$ only when $x=y$. Then for  $n \in \mathbb{N}$ and $\mu \in \mathfrak{M}(X)$ such  that  $\gamma^{n} \in L^{1}(|\mu|\times |\mu|)$,  the kernel $K_{-\gamma}$  defined in  Theorem \ref{pontriequi} satisfies $(K_{-\gamma})^{m} \in L^{1}(|\mu|\times |\mu|)$, $0\leq m \leq n $ and if   
$$
\int_{X}\int_{X}K_{-\gamma}(x,y)^{m}d\mu(x)d\mu(y)=0, \quad 0 \leq m \leq n-1,
$$
then
$$
(-1)^{n}\int_{X}\int_{X}\gamma(x,y)^{n}d\mu(x)d\mu(y)\geq 0 
$$
and 
$$
\int_{X}\int_{X}\gamma(x,y)^{m}d\mu(x)d\mu(y)= 0, \quad 0\leq m \leq n-1. 
$$\end{cor}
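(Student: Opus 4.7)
The plan is to reduce the statement to Lemma \ref{hilbertcondk} via the RKHS feature map associated to $K_{-\gamma}$. Apply Theorem \ref{pontriequi} with $P$ the one-dimensional space of constant functions (so $m = 1$, $p_{1} \equiv 1$, and a fixed base point $\xi \in X$); since $-\gamma$ is CPD, this gives a continuous positive definite kernel
$$K_{-\gamma}(x,y) = -\gamma(x,y) + \gamma(x,\xi) + \gamma(y,\xi) - \gamma(\xi,\xi).$$
Let $\mathcal{H}$ denote its RKHS and $\Phi(x) := K_{-\gamma}(\,\cdot\,,x)$ the canonical feature map. Writing $c := \gamma(x,x)$ for the (constant) diagonal value, a direct cancellation yields
$$\|\Phi(x) - \Phi(y)\|_{\mathcal{H}}^{2} = K_{-\gamma}(x,x) + K_{-\gamma}(y,y) - 2K_{-\gamma}(x,y) = 2\gamma(x,y) - 2c,$$
so $\Phi$ is injective precisely under our hypothesis that $2\gamma(x,y) = \gamma(x,x) + \gamma(y,y)$ only when $x = y$.

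For the integrability claim, the explicit formula above combined with convexity gives $|K_{-\gamma}(x,y)|^{m} \lesssim \gamma(x,y)^{m} + \gamma(x,\xi)^{m} + \gamma(y,\xi)^{m} + c^{m}$ for $0 \le m \le n$. Since $\gamma \ge 0$ one has $\gamma^{m} \le 1 + \gamma^{n}$, so $\gamma^{m} \in L^{1}(|\mu|\times|\mu|)$; Lemma \ref{estimativa} upgrades $\gamma^{n} \in L^{1}(|\mu|\times|\mu|)$ to $\gamma(\cdot,\xi)^{n} \in L^{1}(|\mu|)$, which in turn yields $\gamma(\cdot,\xi)^{m} \in L^{1}(|\mu|)$; and the constant term is trivially integrable. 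Thus $(K_{-\gamma})^{m} \in L^{1}(|\mu|\times|\mu|)$.

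For the moment identities, push $\mu$ forward by $\Phi$ to a finite signed Borel measure $\nu := \Phi_{\ast}\mu$ on $\mathcal{H}$, so that
$$\int_{X}\int_{X} K_{-\gamma}(x,y)^{m}\,d\mu(x)d\mu(y) = \int_{\mathcal{H}}\int_{\mathcal{H}} \langle u,v \rangle^{m}\,d\nu(u)d\nu(v),$$
$$\int_{X}\int_{X} (2\gamma(x,y) - 2c)^{m}\,d\mu(x)d\mu(y) = \int_{\mathcal{H}}\int_{\mathcal{H}} \|u - v\|^{2m}\,d\nu(u)d\nu(v).$$
The hypothesis becomes $\int\int \langle u,v\rangle^{m}\,d\nu d\nu = 0$ for $0 \le m \le n-1$, and Lemma \ref{hilbertcondk} supplies $\int\int \|u-v\|^{2m}\,d\nu d\nu = 0$ for $0 \le m \le n-1$ together with $(-1)^{n} \int\int \|u-v\|^{2n}\,d\nu d\nu \ge 0$. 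Expanding $(2\gamma(x,y) - 2c)^{m} = 2^{m}\sum_{k=0}^{m}\binom{m}{k}(-c)^{m-k}\gamma(x,y)^{k}$ and inducting on $m$ starting from $m = 0$, the vanishing of the left-hand side propagates to $\int\int \gamma^{m}\,d\mu d\mu = 0$ for $0 \le m \le n-1$. Substituting these vanishings into the $m = n$ identity leaves only the top-order term $2^{n}\int\int \gamma^{n}\,d\mu d\mu$, yielding $(-1)^{n}\int\int \gamma^{n}\,d\mu d\mu \ge 0$.

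The main obstacle I anticipate is making rigorous the step that applies Lemma \ref{hilbertcondk} to $\nu$, since $\mathcal{H}$ need not be separable for general $X$ and hence $\nu$ need not be Radon in the sense required by the ambient space $\mathfrak{M}(\mathcal{H})$. This can be bypassed by redoing the argument of Lemma \ref{hilbertcondk} verbatim on $X$ using the positive definite kernel $K_{-\gamma}$ in place of the Euclidean inner product: the only essential inputs there are the multinomial-type expansion of $K_{-\gamma}^{\,n}$ and the vanishing criterion supplied by Lemma \ref{initialextmmddominio}, both of which apply at the level of $X$ without any appeal to a push-forward measure on $\mathcal{H}$.
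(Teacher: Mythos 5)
Your proposal is correct and is essentially the paper's own argument: the paper likewise reduces the statement to Lemma \ref{hilbertcondk} by pushing $\mu$ forward along a Hilbert-space embedding, realized there through the Schoenberg representation $\gamma(x,y)=\|T(x)-T(y)\|_{\mathcal{H}}^{2}+c$ rather than the RKHS feature map of $K_{-\gamma}$, and these two maps agree up to a translation and a factor of $\sqrt{2}$ since $\|\Phi(x)-\Phi(y)\|^{2}=2(\gamma(x,y)-c)$. Your separability worry is harmless: Lemma \ref{hilbertcondk} is stated for an arbitrary real Hilbert space, and the pushforward of a finite Radon measure under the continuous map $\Phi$ is again Radon, so the verbatim rerun of the lemma on $X$ is not needed.
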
	
	
\begin{proof}By the hypothesis on $\gamma$, there exists a Hilbert space $\mathcal{H}$ and a continuous and injective function $T: X \to \mathcal{H}$, such that $\gamma(x,y)= \|T(x) - T(y)\|_{\mathcal{H}}^{2}+ c$, where $c \geq 0$ is the value of $\gamma$ on the diagonal. If $\mu \in \mathfrak{M}(X)$ is a measure satisfying the conditions on the Corollary, then the image measure $\mu_{T} \in \mathfrak{M}(\mathcal{H})$ satisfies the same conditions of Lemma \ref{hilbertcondk}. The conclusion follows by standard properties of image measures.\end{proof}	
	
\begin{proof}[\textbf{Proof of Theorem \ref{principal}}] By Equation \ref{compleelltimes}, we have that
$$
\psi(\gamma(x,y))=\int_{(0,\infty)} \frac{e^{-\gamma(x,y)r} - e_{\ell}(r)\omega_{\ell,\infty}(\gamma(x,y)r)}{r^{\ell}} d\lambda(r) + \sum_{k=0}^{\ell}a_{k}\gamma(x,y)^{k}.
$$
By the hypothesis, the $\ell +2$ functions above are in $L^{1}(|\mu| \times |\mu|)$. Corollary \ref{hilbertcondkcor} implies that
$$
 \int_{X}\int_{X}\sum_{k=0}^{\ell}a_{k}\gamma(x,y)^{k}d\mu(x)d\mu(y) =  \int_{X}\int_{X}a_{\ell}\gamma(x,y)^{\ell}d\mu(x)d\mu(y) \geq 0.
$$
On the other hand,  because of Lemma \ref{change} we can apply  Fubini-Tonelli, and then
\begin{align*}
 &\int_{X}\int_{X} \left [\int_{(0,\infty)} \frac{e^{-\gamma(x,y)r} - e_{\ell}(r)\omega_{\ell,\infty}(\gamma(x,y)r)}{r^{\ell}} d\lambda(r)\right ]d\mu(x)d\mu(y) \\
 =&\int_{(0,\infty)}\frac{1}{r^{\ell}}\left [\int_{X}\int_{X} e^{-\gamma(x,y)r} d\mu(x)d\mu(y)\right ]d\lambda(r) \geq 0,
\end{align*}
because the inner double  integral is a nonnegative number for every $r>0$ by \cite{gaussinfi}.\\
Because the representation for $\psi$ is unique, if $\psi$ is not a polynomial of degree $\ell$ or less then $\lambda((0, \infty))>0$, also, if $2\gamma(x,y)=\gamma(x,x) + \gamma(y,y)$ only when $x=y$, by  \cite{gaussinfi} the inner double  integral is a positive number for every $r>0$ when $\mu$ is not the zero measure, and then the triple integral is a positive number as well.\end{proof}	

\begin{lem}\label{change}  There exists an $M>0$, which only depends on $\ell \in \mathbb{Z_{+}}$ for which
\begin{equation}\label{changeformula} 
|e^{-rt} - e_{\ell}(r)\omega_{\ell, \infty}(rt) | \leq Mr^{\ell}(1+t^{\ell})\min \{ 1,r^{-\ell}\}, \quad r > 0, t\geq 0.  
\end{equation}
\end{lem}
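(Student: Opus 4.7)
The plan is to split the estimate into the two regimes $r\leq 1$ and $r\geq 1$, corresponding to the two values taken by $\min\{1,r^{-\ell}\}$; the target bound becomes $Mr^{\ell}(1+t^{\ell})$ in the first case and $M(1+t^{\ell})$ in the second.

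First I would collect three elementary auxiliary estimates, with constants depending only on $\ell$. From the alternating series $e^{-s}=\sum_{l\geq 0}(-1)^{l}s^{l}/l!$ and the standard remainder bound, one obtains $|e^{-s}-\omega_{\ell,\infty}(s)|\leq s^{\ell}/\ell!$ for $s\geq 0$. Writing $1-e_{\ell}(s)=e^{-s}\sum_{l\geq\ell}s^{l}/l!$ and using the comparison $\sum_{l\geq\ell}s^{l}/l!\leq (s^{\ell}/\ell!)\,e^{s}$ (which follows from $(m+\ell)!\geq m!\,\ell!$) yields $|1-e_{\ell}(s)|\leq s^{\ell}/\ell!$. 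Finally, a direct estimate from the definition gives $|\omega_{\ell,\infty}(s)|\leq\sum_{l=0}^{\ell-1}s^{l}/l!\leq C_{\ell}(1+s^{\ell-1})$.

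For $r\leq 1$ I would decompose
$$
e^{-rt}-e_{\ell}(r)\omega_{\ell,\infty}(rt)=\bigl[e^{-rt}-\omega_{\ell,\infty}(rt)\bigr]+\bigl[1-e_{\ell}(r)\bigr]\omega_{\ell,\infty}(rt).
$$
The first bracket is at most $(rt)^{\ell}/\ell!\leq r^{\ell}(1+t^{\ell})/\ell!$. For the second bracket, the building blocks above together with $r\leq 1$ (so $(rt)^{\ell-1}\leq t^{\ell-1}$) give $|1-e_{\ell}(r)|\,|\omega_{\ell,\infty}(rt)|\leq (r^{\ell}/\ell!)\,C_{\ell}(1+t^{\ell-1})$, which is absorbed into the required bound $M\,r^{\ell}(1+t^{\ell})$.

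For $r\geq 1$ I would treat the two pieces separately: clearly $|e^{-rt}|\leq 1\leq 1+t^{\ell}$, and expanding
$$
e_{\ell}(r)\omega_{\ell,\infty}(rt)=e^{-r}\sum_{l,k=0}^{\ell-1}(-1)^{k}\frac{r^{l+k}\,t^{k}}{l!\,k!},
$$
I would invoke the uniform bound $\sup_{r\geq 0}e^{-r}r^{j}<\infty$ for each integer $0\leq j\leq 2(\ell-1)$, which reduces the absolute value of the double sum to a polynomial in $t$ bounded by $C_{\ell}(1+t^{\ell-1})\leq C_{\ell}(1+t^{\ell})$. Taking $M$ to be the maximum of the finitely many $\ell$-dependent constants obtained completes the argument. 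The computations are all elementary, so there is no substantial obstacle; the only point requiring mild care is to keep the $t$-dependence polynomial of degree at most $\ell$ throughout (rather than the crude exponential bound $|\omega_{\ell,\infty}(s)|\leq e^{s}$), and it is precisely this issue that dictates the case split at $r=1$.
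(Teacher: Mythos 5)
Your argument is correct and follows essentially the same route as the paper's proof: the same case split at $r=1$, the same decomposition $e^{-rt}-e_{\ell}(r)\omega_{\ell,\infty}(rt)=\bigl[e^{-rt}-\omega_{\ell,\infty}(rt)\bigr]+\bigl[1-e_{\ell}(r)\bigr]\omega_{\ell,\infty}(rt)$ in the regime $r\leq 1$, and the same exploitation of the exponential decay of $e_{\ell}(r)$ in the regime $r\geq 1$, the only difference being that you make the $\ell$-dependent constants explicit where the paper merely invokes boundedness. The single point to tidy is the justification of $|e^{-s}-\omega_{\ell,\infty}(s)|\leq s^{\ell}/\ell!$: the Leibniz alternating-series estimate requires decreasing terms (which fails for large $s$), so cite instead Taylor's theorem with Lagrange remainder for $e^{-s}$, which gives the inequality at once and leaves your proof intact.
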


\begin{proof} Note that $r^{\ell} \min \{ 1,r^{-\ell} \}= \min \{ r^{\ell}, 1 \}$.\\ 
Case $r \geq 1$: On this case, the right hand side of Equation \ref{changeformula}  is $(1+t^{\ell})$, while  the left hand side is 
$$
|e^{-rt} - e_{\ell}(r)\omega_{\ell, \infty}(rt) | \leq 1 + |e_{\ell}(r)\omega_{\ell, \infty}(rt)| \leq 1 + \sum_{l=0}^{\ell-1}|e_{\ell}(r)r^{l}|t^{l}/ l!.
$$
Since each function $|e_{\ell}(r)r^{l}|$ is bounded, the results follows from the fact that $t^{l} \leq 1+t^{\ell}$.

Case $r <1$: On this case, the right hand side of Equation \ref{changeformula}  is $(1+t^{\ell})r^{\ell}$, while  the left hand side is
$$
|e^{-rt} - e_{\ell}(r)\omega_{\ell, \infty}(rt) |\leq  |e^{-rt} - \omega_{\ell, \infty}(rt)| + |(e_{\ell}(r)-1)\omega_{\ell, \infty}(rt)|.
$$
The function  $[e^{-s} - \omega_{\ell, \infty}(s)]/ s^{\ell}$ is a bounded function on $s \in [0, \infty)$, and from this we obtain the desired inequality for $|e^{-rt} - \omega_{\ell, \infty}(rt)|$.\\ 
On the other function we have that
$$
|(e_{\ell}(r)-1)\omega_{\ell, \infty}(rt)| \leq \sum_{l=0}^{\ell-1}|(e_{\ell}(r)-1)r^{l}|t^{l}/ l!.
$$
Similarly, since $e_{\ell}(r)-1= -e^{-r}\sum_{k=\ell}^{\infty}r^{k}/k!$ the functions  $(e_{\ell}(r)-1)r^{l}r^{-\ell}$ are bounded  on $r \in (0, 1)$  and from this we also obtain the desired inequality for $|(e_{\ell}(r)-1)\omega_{\ell, \infty}(rt)|$, which concludes the proof.
\end{proof}

\subsection{\textbf{Section \ref{Equimeasurability for derivatives of completely monotone functions}}}	

\begin{proof}[\textbf{Proof of Theorem \ref{kobolnewneural}}] Since $\mathcal{H}$ is infinite dimensional, take $(e_{\iota})_{\iota \in \mathbb{N}}$ be an orthonormal sequence of vectors in $\mathcal{H}$. By the Dominated Convergence Theorem, we have that
$$
0=\int_{\mathcal{H}}\psi(\|x-y -  re_{\iota}\|^{2})d\mu(x) \to \int_{\mathcal{H}}\psi(\|x-y \|^{2} + r^{2})d\mu(x), \quad y \in \mathcal{H} , \quad r \in \mathbb{R}
$$
because  $\langle x-y, e_{\iota}\rangle \to 0$ as $\iota \to \infty$ and $|\psi(t)|  \leq |\varphi(t)| + |\phi(t)|\lesssim (1+t)^{\ell}$, which proves the first assertion.\\
Now, if   $\psi$ is a polynomial of degree $n$, let $t_{1}, \ldots, t_{N} \in \mathbb{R}$, $c_{1}, \ldots, c_{N} \in \mathbb{R}$ (not all null) such that $ \sum_{i=1}^{N}c_{i}p(t_{i})=0$ for every $p \in \pi_{2n}(R)$. Then if $\|v\|=1$, the measure $\mu:= \sum_{i=1}^{N}c_{i}\delta(t_{i}v) \in \mathfrak{M}(\mathcal{H})$ is  nonzero and 
$$
\int_{\mathcal{H}}\psi(\|x-y\|^{2})d\mu(x)= \sum_{i=1}^{N}c_{i}\psi(\|y- \langle y,v \rangle v \|^{2} + ( \langle y,v \rangle -t_{i} )^{2})=0
$$
because this function is polynomial of degree $2n$ for every fixed $y \in \mathcal{H}$.\\
For the converse, first, we show that is sufficient to prove the case $\ell=0$.\\
Indeed,  the function $c \in (0, \infty)\to F(c):= \psi(\|x-y\|^{2}+c) \in \mathbb{R}$ is  differentiable for every $x,y \in \mathcal{H}$, and
$$
\frac{\partial F}{\partial c}(y) = \psi^{\prime}(c+ \|x-y\|^{2}).
$$
Since $\psi= \varphi - \phi$, and those functions are elements of $ CM_{\ell}$, we have that $|\psi^{\prime}(t+c)| \lesssim (1+t)^{\ell-1}$, for every $c>0$. In particular, the derivative  is a function in  $L^{1}(|\mu|)$ and
\begin{equation}\label{recursion2}
\int_{\mathcal{H}}\psi^{\prime }(c+\|x-y\|^{2})d\mu(x)=0, \quad y \in \mathcal{H}, \quad c>0. 
\end{equation}
Since $\psi^{\prime }(c +\cdot) $ also is the difference between two functions in $ CM_{\ell-1}$ for every $c>0$, by induction, we may assume that $\ell=0$.\\ 
Assume that $\psi$ is not a polynomial and $\mu$ is a nonzero measure that satisfy the equality on the statement of the Theorem. The function $\psi(c+ \cdot)$ is the difference between two completely monotone functions on $[0, \infty)$, so there exists a measure $\beta_{c}$ in $[0, \infty)$ for which 
$$
\psi(c+ t)= \int_{[0, \infty)}e^{-rt}d\beta_{c}(r), \quad c>0, t\geq0
$$
and $d\beta_{c+s}(r)= e^{-rs}d\beta_{c}(r)$ for every $c,s > 0$. Integrating the function on the hypotheses with respect to the measure $d\mu(y)$, we obtain that
\begin{equation}\label{nonposgauss}
0=\int_{\mathcal{H}}\int_{\mathcal{H}}\psi(c+\|x-y\|^{2})d\mu(x)d\mu(y) = \int_{[0, \infty)}\int_{\mathcal{H}}\int_{\mathcal{H}}e^{-r\|x-y\|^{2}}d\mu(x)d\mu(y)d\beta_{c}(r), \quad c>0.
\end{equation}
The continuous and bounded function $I_{\mu}(r):=\int_{\mathcal{H}}\int_{\mathcal{H}}e^{-r\|x-y\|^{2}}d\mu(x)d\mu(y)$, $r\geq0$, is  positive for every $r>0$ by \cite{gaussinfi}, additionally Equation \ref{nonposgauss} implies that ($c=s+1$)
$$
0= \int_{[0, \infty)}e^{-sr} I_{\mu}(r)d\beta_{1}(r), \quad  s \geq 0. 
$$
By the uniqueness representation of Laplace transform, this can only occur if the finite measure $ I_{\mu}d\beta_{1}$ is the zero measure on $[0, \infty)$. The behaviour of $I_{\mu}$ implies that this occur if and only if $I_{\mu}(0)=0$ and $\beta_{1}$ is a multiple of $\delta_{0}$, the latter implies that $\psi$ is a constant function, which is a contradiction. 
\end{proof}
\begin{proof}[\textbf{Proof of Lemma \ref{kobolnew2}}] By Theorem \ref{kobolnewneural} we only need to focus on the finite dimensional case.  We prove $(i)$ and $(ii)$ by  showing that is sufficient to prove the case $\ell=1,2$, which will follow from $(iii)$ and $(iv)$. For the induction  argument on $(ii)$ we assume a more general setting, that $\psi(t)= t^{\ell-1}\log (t) + bt^{\ell-1}$, with $b \in \mathbb{R}$.\\
Indeed, suppose that $\ell \geq 3$. Note then that the function $y \in \mathcal{H} \to F(y):=\psi(\|x-y\|^{2}) \in \mathbb{R}$ is twice differentiable on each direction of an orthonormal basis  $(e_{\iota})_{\iota \in \mathfrak{I}}$ for $\mathcal{H}$, and
$$
\frac{\partial^{2}F}{\partial^{2} e_{\iota}}(y) = 4\psi^{\prime \prime}(\|x-y\|^{2})(y_{\iota} - x_{\iota})^{2} + 2\psi^{\prime}(\|x-y\|^{2}).
$$
Since $\psi \in C^{\ell-1}([0, \infty)) \cap CM_{\ell}$ (or $-\psi$ is an element, the sign does not make difference for the induction step), we have that $|\psi^{\prime}(t)| \lesssim (1+t)^{\ell-1}$ and $|\psi^{\prime \prime}(t)| \lesssim (1+t)^{\ell-2}$. In particular, the second derivative  is a function in  $L^{1}(|\mu|)$ and summing on the  $\iota$ variable we obtain ($m= dim(\mathcal{H})$)
\begin{equation}\label{recursion}
0= \int_{\mathcal{H}}4\psi^{\prime \prime}(\|x-y\|^{2})\|x-y\|^{2} + 2m\psi^{\prime}(\|x-y\|^{2})d\mu(x), \quad y \in \mathcal{H}. 
\end{equation}
When  $\psi$ is a function of type $(i)$ or $(ii)$, the integrand on this equation is equal to a positive multiple of $\|x-y\|^{2a-2}$ (or  $\|x-y\|^{2\ell-4}\log (\|x-y\|^{2})$ plus  a multiple of $\|x-y\|^{2\ell-4}$), which is the induction argument.\\
Now, let $\psi$ be an arbitrary function on $CM_{\ell}$, $\ell=1,2$, that is not a polynomial. For every $t >0$, define $\eta_{t} := t\mu - \tau_{t}$, where $ \tau_{t} =t\mu(\mathcal{H})\delta_{0} - (\delta_{t v_{\mu}} - \delta_{-tv_{\mu}})/2$ and  $v_{\mu}$ is the  vector mean, that is
$$
\int_{\mathcal{H}}\langle x,y \rangle d\mu(x) = \langle v_{\mu}, y  \rangle, \quad y \in \mathcal{H}.
$$
On the case $\ell=1$ the vector $v_{\mu}$ might not be well defined, on this case define it as the vector zero. Then $\eta_{t}(\mathcal{H})=0$, and if it is well defined $v_{\eta_{t}}=0$. By the hypothesis we  obtain that
\begin{align*}
 4\int_{\mathcal{H}}\int_{\mathcal{H}} \psi(\|x-y\|^{2})d\eta_{t}(x)d\eta_{t}(y) &= \int_{\mathcal{H}}\int_{\mathcal{H}} \psi(\|x-y\|^{2})d2\tau_{t}(x)d2\tau_{t}(y)\\
&=\psi(0)( 4t^{2}\mu(\mathcal{H})^{2} +2) - 2 \psi(4t^{2}\|v_{\mu}\|^{2}) 
\end{align*}
By Theorem \ref{principal}, this is a nonnegative number for every $t>0$.\\
On the other hand, if $\ell=2$ by the relation on Equation \ref{compleelltimes2}, we know that $(-1)^{2}\psi(t)$ converges to $+\infty$ as $t\to \infty$, so if $\|v_{\mu}\|\neq 0$ or $\psi(0)<0$ we would reach a contradiction, consequently $v_{\mu}=0, \psi(0)= 0$. In particular,  we obtain that the double integral with respect to $\eta_{1}$ is zero, which by Theorem \ref{principal}   we must have that $\mu= \mu(\mathcal{H})\delta_{0}$, because $\psi$ is not a polynomial. From this equality and the initial assumption on $\mu$ we obtain that $\mu(\mathcal{H})\psi(\|y\|^{2})=0$ for every $y \in \mathcal{H}$, which can only occur if $\mu$ is the zero measure because $\psi$ is not a polynomial.\\ 
The case $\ell=1$ follows by a similar analysis.
\end{proof}

\bibliographystyle{siam}
\bibliography{Referrences.bib}

\end{document}